\documentclass[11pt,a4paper]{article}
\usepackage[T1]{fontenc}
\usepackage{lmodern}
\usepackage{amsmath,amssymb,amsthm,mathtools}
\usepackage{booktabs,array}
\usepackage{tikz}
\usetikzlibrary{arrows.meta,positioning,fit}
\usepackage[margin=27mm]{geometry}
\usepackage{microtype}
\usepackage[hidelinks,hypertexnames=false]{hyperref}
\hypersetup{pdftitle={Cayley-graph density of Thompson's group F: local deletions and finite-window bounds},pdfauthor={Thomas Prellberg}}
\usepackage{enumitem,needspace,float}
\setlist{itemsep=2pt,topsep=4pt}
\newtheorem{theorem}{Theorem}[section]
\newtheorem{lemma}[theorem]{Lemma}
\newtheorem{proposition}[theorem]{Proposition}

\theoremstyle{definition}

\theoremstyle{remark}

\newtheorem{example}{Example}[section]
\newcommand{\dens}{\operatorname{dens}}
\newcommand{\htree}{\operatorname{ht}}
\newcommand{\wt}{\operatorname{wt}}
\newcommand{\BB}{\operatorname{BB}}
\newcommand{\Q}{\mathbb Q}
\newcommand{\E}{\mathrm E}
\newcommand{\HH}{\mathrm H}
\newcommand{\LL}{\mathrm L}
\newcommand{\CC}{\mathrm C}
\newcommand{\JJ}{\mathrm J}
\newcommand{\ind}{\mathbf1}
\newcommand{\Gh}{\Gamma_{\mathrm h}}
\DeclareMathOperator{\pref}{pref}
\DeclareMathOperator{\state}{state}
\DeclareMathOperator{\cat}{cat}
\title{Cayley-graph density of Thompson's group $F$:\\local deletions and finite-window bounds}
\author{Thomas Prellberg\\
\small School of Mathematical Sciences, Queen Mary University of London\\
\small Mile End Road, London E1 4NS, United Kingdom\\
\small \texttt{t.prellberg@qmul.ac.uk}}
\date{}
\begin{document}
\maketitle
\begin{abstract}
Let $\Gamma$ be the Cayley graph of Thompson's group $F$ with its standard generators. Belk and Brown constructed finite marked-forest subgraphs of limiting density $7/2$, and Guba showed that deleting certain low-degree vertices gives density greater than $3.5004$. We refine this approach in two directions. First, a deterministic interval-deletion rule followed by root-sensitive triple and pair deletions yields finite induced subgraphs with
\[
\dens(\Gamma)>3.50074529.
\]
The interval rule is evaluated by a nine-state recurrence, while the simultaneous deletion conditions created by split and merge operations are computed exactly by a common-suffix first-passage argument. Second, we determine the joint distribution of the categories of a tree and its two children and reduce optimization over every fixed category-window retention rule to a weighted densest-subgraph problem. An explicit edge-allocation certificate shows that, for every fixed $a\ge0$, the optimal limiting density of a rule depending on the window $[-a,1]$ is exactly $7/2$. Thus arbitrary finite left context together with one right-hand category does not improve the Belk--Brown limit, whereas the root-sensitive whole-segment construction does. All constants are explicit elements of $\Q(\sqrt3,\sqrt{2\sqrt3-1})$.
\end{abstract}
\noindent\emph{2020 Mathematics Subject Classification.} 20F65, 05C25, 05A15.\\
\emph{Keywords.} Thompson's group $F$; Cayley graph; density; marked forest; generating function; local deletion.

\section{Introduction and the density bound}\label{sec:intro}
Let $\Gamma$ be the undirected Cayley graph of Thompson's group $F$ with its standard generating set $\{x_0,x_1\}$. For a nonempty finite vertex set $Y$, define
\[
\dens(Y)=\frac{2|E(Y)|}{|Y|},\qquad
\dens(\Gamma)=\sup_{\varnothing\ne Y\subset F\text{ finite}}\dens(Y),
\]
where $E(Y)$ denotes the undirected edges with both endpoints in $Y$. If $\partial_EY$ is the undirected edge boundary, then
\begin{equation}\label{eq:cheeger}
\dens(Y)=4-\frac{|\partial_EY|}{|Y|},\qquad
\dens(\Gamma)=4-h_E(\Gamma).
\end{equation}
Here $h_E(\Gamma)=\inf_{\varnothing\ne Y\subset F\text{ finite}}|\partial_EY|/|Y|$ is the edge-isoperimetric constant.
The relevance of \eqref{eq:cheeger} is the long-standing open problem of whether Thompson's group $F$ is amenable: for the standard two-generator Cayley graph, amenability is equivalent to $\dens(\Gamma)=4$. The forest-diagram description of $F$ is due to Belk and Brown \cite{BB}; general background is given in \cite{CFP}. Belk and Brown's finite marked-forest sets have densities tending to $7/2$, and Guba's deletion of low-degree triples gave the first strict improvement, proving $\dens(\Gamma)>3.5004$ \cite{GubaDensity}; related systematic use of deletion to increase Cayley-graph density was developed in \cite{AGLP}. The present paper continues this programme by sharpening the deletion analysis and raising the explicit lower bound further; this moves the known density towards the amenable value $4$, but a lower bound strictly below $4$ does not by itself decide amenability.

The contribution has two parts. First, we make Guba's deletion strategy explicit for a larger root-sensitive family. The initial deletions are selected by a deterministic weighted-interval recurrence whose values and selected positions are defined together. A split or merge changes the category word of the forest, so verifying that several neighbouring vertices have been deleted requires applying the same recurrence to several related words. Lemma~\ref{lem:common-suffix} gives the exact comparison once the unread suffixes agree, and the resulting frequencies are algebraic. Section~\ref{sec:horizontal} develops the recurrence from concrete examples, and Section~\ref{sec:gadgets} connects each deletion test to an actual Cayley-graph neighbour.

Second, we isolate a limitation of fixed-window category rules. The joint distribution of the category of a tree and the categories of its two children turns every retention rule on a prescribed finite window into a weighted densest-subgraph problem. Theorem~\ref{thm:one-right} gives an explicit upper-bound certificate showing that arbitrary finite left context, together with only the immediate right category, cannot improve the limiting density $7/2$. This separates that one-sided fixed-window class from the root-sensitive whole-segment constructions used for the improved lower bound.

The resulting explicit density bound is the following.
\begin{theorem}\label{thm:main}
There are finite induced subgraphs of $\Gamma$ with density greater than $3.50074529$. More precisely, the construction below gives an explicit algebraic number $D_+$ such that
\[
\dens(\Gamma)\ge D_+,\qquad
3.5007452936557<D_+<3.5007452936559.
\]
In particular, $h_E(\Gamma)<0.49925471$.
\end{theorem}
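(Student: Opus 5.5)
The construction starts from the Belk--Brown family of finite marked-forest sets $\BB_n$ and deletes a carefully chosen subset of low-degree vertices. Since $\dens$ of an induced subgraph is $2|E|/|Y|$, deleting a vertex $v$ of degree $d(v)$ in the current set changes the edge count by $-d(v)$ and the vertex count by $-1$. Starting from density near $7/2$, the deletion is profitable exactly when the deleted vertices have average current degree below $7/4$, counting edges between two deleted vertices only once.

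The plan has three steps.
\begin{enumerate}
\item Encode each forest in $\BB_n$ by its category word, reading trees left to right, and define a deterministic weighted-interval recurrence. This is the nine-state recurrence announced in the introduction. It selects a set of positions (intervals) at which the vertex is deleted, and its value and choice are computed together. Because the rule is a finite-state transducer on the category word, the asymptotic frequency of deletions and of each local degree pattern will be given by a rational generating function in the Catalan-type variable. Its dominant singularity lies in $\Q(\sqrt3)$, which produces the algebraic constants.
\item Add the root-sensitive triple and pair deletions of Guba type among the remaining vertices. The goal is to count exactly how many edges each deleted vertex loses, and here the main obstacle is double counting. A neighbour of $v$ via $x_0^{\pm1}$ or $x_1^{\pm1}$ acts by a split or merge near the pointer, so it changes the category word. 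To know whether that neighbour was also deleted, one must run the same recurrence on a modified word. For this I would invoke Lemma~\ref{lem:common-suffix}. The two words differ only in a bounded prefix, so once the recurrence states synchronise after reading the differing part, the decisions on the common unread suffix coincide. This turns each simultaneous-deletion condition into a first-passage (coupling-time) generating function, computable in closed form. It then remains to tabulate, for each gadget type from Section~\ref{sec:gadgets}, which Cayley neighbours exist and in which cases both endpoints are deleted.
\item Assemble the limiting density as
\[
D_+ = \lim_{n\to\infty}\frac{2\bigl(|E(\BB_n)| - \text{removed edges}\bigr)}{|\BB_n| - \text{removed vertices}}.
\]
The numerator and denominator are normalised by $|\BB_n|$, and each normalised frequency is the explicit algebraic number obtained in steps 1 and 2. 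Singularity analysis gives these frequencies as elements of $\Q(\sqrt3,\sqrt{2\sqrt3-1})$, because the first-passage functions introduce the second square root. Evaluating them to high precision gives the enclosure $3.5007452936557<D_+<3.5007452936559$.
\end{enumerate}

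Since $D_+$ is a limit of densities of finite sets, there are finite induced subgraphs of density arbitrarily close to $D_+$, and hence above $3.50074529$. This gives $\dens(\Gamma)\ge D_+$. The bound on $h_E(\Gamma)$ then follows from \eqref{eq:cheeger}. I expect the hardest part to be the exact bookkeeping in step 2: checking, for every adjacency between deleted vertices, that the coupling argument applies and that no boundary effects at the forest ends persist in the limit.
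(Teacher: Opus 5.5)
Your outline has the paper's architecture: the nine-state interval rule, then $C$-centred triples and $J$-centred pairs chosen in $\Gh$, with Lemma~\ref{lem:common-suffix} handling the simultaneous first-stage deletion tests. However, step~3, where the theorem is actually proved, is mis-specified. The paper does not count removed edges exactly, and $D_+$ in \eqref{eq:Dmain} is a lower-bound expression, not asserted to be the exact limiting density. Two facts make the assembly work, and you state neither.

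First, Lemma~\ref{lem:disjoint}: all second-stage target sets are pairwise disjoint and avoid the first-stage deletions. So the retained fraction is exactly $\sigma=1-\rho-3q_3-3p_0-2q_2$. Without this, your \emph{removed vertices} term is not the sum of the gadget frequencies.

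Second, the individual incident-edge counts in $\Gh$ add up to an \emph{upper} bound for the edge loss. These counts are exactly additive for the horizontal intervals by Proposition~\ref{prop:horizontal}, five per triple and at most three per pair. The sum is only an upper bound because an edge joining two different gadgets would be subtracted twice. That inequality goes the favourable way, so the double counting you call the main obstacle needs no exact bookkeeping. Claiming equality, as your formula for $D_+$ does, would oblige you to rule out or count such shared edges.

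Note also that Lemma~\ref{lem:common-suffix} is not about adjacency between deleted vertices, and it is not a synchronisation statement. At the comparison point the executions keep different exact differences, e.g.\ $-2$ and $-1$ in Table~\ref{tab:pair-states}. Its content is that equal increments preserve their order, so joint acceptance reduces to the single first-passage probability $w$ or $u$ of the minimum.

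The limits must also be iterated. $\BB(n,k)$ has two parameters. At fixed $k$ the base density is $4-2\xi_k<7/2$, and every frequency is the $k$-dependent quantity built from $e_k,h_k,\ell_k,j_k$. You need three steps: the fixed-$k$ bound \eqref{eq:Dk}; convergence of its right side to $D_+$ as $k\to\infty$; and then a choice of large $k$ followed by large $n$. A single limit in $n$ over $\BB_n$ does not produce $D_+$. The enclosure must come from rigorous rational interval arithmetic as in Appendix~\ref{app:comparison}, not from high-precision evaluation.

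Finally, your account of the field is wrong. The first-passage probabilities are rational in $e,h$, so $\omega,\rho,q_3,p_0\in\Q(\sqrt3)$. The $\sqrt3$ enters through $h=\lim_k(1-\sqrt{1-\xi_k})$, not through the singularity, which tends to $1/4$. The radical $\sqrt{2\sqrt3-1}$ enters only through $j$, the mass of the height-$(k-1)$ pair centres, in $q_2$.
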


The construction first selects intervals of marker positions to maximize an additive score, using specified tie conventions, and then deletes disjoint triples and pairs whose split and merge neighbours satisfy explicit deletion tests. We denote its horizontal score and deleted mass by $\omega$ and $\rho$, the combined frequencies of three ordinary triple families and two pair families by $q_3$ and $q_2$, and the frequency of a separated triple by $p_0$. In this notation
\begin{equation}\label{eq:Dmain}
D_+=\frac72+
\frac{\omega+q_3+p_0+2q_2}{2(1-\rho-3q_3-3p_0-2q_2)}.
\end{equation}
Each quantity is evaluated by closed formulas in Sections~\ref{sec:horizontal}--\ref{sec:counts}; the explicit algebraic coefficients are collected in \eqref{eq:mainconstants}. The interval rule is optimal for its score objective, not asserted to optimize the density ratio. Likewise, $D_+$ is a limiting \emph{lower-bound expression}, because simultaneous local deletions can share incident edges. The horizontal stage alone has an exactly evaluated limiting density.

Section~\ref{sec:BB} establishes the local forest measure. Section~\ref{sec:horizontal} derives the prefix and first-passage formulas. Sections~\ref{sec:gadgets}--\ref{sec:counts} give the geometric deletions and prove Theorem~\ref{thm:main}. Section~\ref{sec:windows} proves the finite-window optimization theorem and its explicit one-sided application. All enumerations used in the proofs are derived from the displayed recurrences and tables.

\section{Marked forests and their local measure}\label{sec:BB}
\subsection{The partial Cayley action}
A rooted ordered binary tree is the one-leaf tree $E$, or an ordered pair $(T_0,T_1)$ of such trees. Write $|T|$ for its number of leaves and
\[
\htree(E)=0,\qquad \htree((T_0,T_1))=1+\max(\htree(T_0),\htree(T_1)).
\]
For a finite sequence of trees $\mathcal F=(T_1,\ldots,T_m)$, write $v(\mathcal F,i)$ for the marked forest whose marker is at $T_i$. A marked forest is one such pair of an underlying forest and a marker position. The set $\BB(n,k)$ consists of all marked forests with $n$ leaves and all tree heights at most $k$.

We use the realization of these forests as a finite Cayley subgraph described in \cite[Sections 2 and 4]{GubaDensity}, based on \cite{BB}. The four partial operations move the marker left, move it right, split the marked nontrivial root and mark its left child, and merge the marked tree with its right neighbour and mark the new root. The last operation remains in $\BB(n,k)$ exactly when both input heights are at most $k-1$. Each split is inverse to a merge. We use undirected adjacency, so inversion of the edge-labelling convention has no effect on any count below. Each marked forest is one vertex; different marker positions are different vertices.

For $k\ge3$, distinguish the categories
\[
\E:\ E,\qquad
\HH:\ \htree(T)=k,\qquad
\LL:\ 1\le\htree(T)\le k-1.
\]
Two subcategories of $\LL$ will be used: the single tree $C=(E,E)$, denoted $\CC$, and the trees $J$ of exact height $k-1$, denoted $\JJ$. They are disjoint for $k\ge3$.

Write $\cat(T)\in\{\E,\LL,\HH\}$ for the category of a tree $T$. The \emph{category word} of $(T_1,\ldots,T_m)$ is $\cat(T_1)\cdots\cat(T_m)$. A \emph{segment} is a maximal consecutive block of positions whose categories lie in $\{\E,\HH\}$. If a segment meets neither end of the forest, its two adjacent trees have category $\LL$; we call these its left and right \emph{guards}. Write $\varepsilon$ for the empty word. Thus $\LL W\LL$ denotes a segment $W\in\{\E,\HH\}^*$ together with its two bounding trees, allowing $W=\varepsilon$ for adjacent guards; the empty segment contributes no marker positions. A pattern of category symbols describes all forests with those categories, not a single choice of trees. We distinguish the category symbol $\HH$ from a specific tree $H$ of that category, and similarly for $\LL$.

\subsection{Coefficient ratios}
The leaf generating polynomial for trees of height at most $k$ satisfies
\begin{equation}\label{eq:phi}
\Phi_0(z)=z,\qquad \Phi_k(z)=z+\Phi_{k-1}(z)^2.
\end{equation}
Let $\xi_k>0$ be the unique solution of $\Phi_k(\xi_k)=1$. Then $\xi_k\downarrow1/4$: at $z=1/4$ the iteration stays below $1/2$, while at every $z>1/4$ it eventually exceeds one. The generating functions for unmarked and marked forests are respectively
\begin{equation}\label{eq:forestGF}
\frac1{1-\Phi_k(z)}=\sum_n\alpha_k(n)z^n,\qquad
\frac{\Phi_k(z)}{(1-\Phi_k(z))^2}=\sum_n\beta_k(n)z^n.
\end{equation}
Thus $|\BB(n,k)|=\beta_k(n)$. For $k\ge1$ the positive coefficients of both $z$ and $z^2$ in $\Phi_k$ imply that $\xi_k$ is its unique solution of $\Phi_k(z)=1$ of smallest modulus; the zero of $1-\Phi_k$ is simple.

\begin{lemma}[Local coefficient ratio]\label{lem:ratio}
Fix $k\ge1$. If $R(z)$ is analytic in a neighbourhood of $|z|\le\xi_k$, then
\begin{equation}\label{eq:ratio}
\frac{[z^n]R(z)(1-\Phi_k(z))^{-2}}{\beta_k(n)}\longrightarrow R(\xi_k).
\end{equation}
In particular, a fixed window of trees about a uniformly chosen marker converges to the product distribution assigning probability $\xi_k^{|T|}$ to a tree $T$.
\end{lemma}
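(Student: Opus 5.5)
This is singularity analysis at a simple pole. Set $G(z)=(1-\Phi_k(z))^{-2}$. By the discussion preceding the lemma, $\xi_k$ is the unique zero of $1-\Phi_k$ of smallest modulus, and it is simple. Since $\Phi_k$ is a polynomial with nonnegative coefficients, $|\Phi_k(z)|\le\Phi_k(|z|)<1$ for $|z|<\xi_k$. On the circle $|z|=\xi_k$, equality $\Phi_k(z)=1$ forces every monomial to be aligned with the positive reals. Because the monomials $z$ and $z^2$ both occur, this forces $z=\xi_k$. Hence there is $r>\xi_k$ such that $G$ is meromorphic in $|z|<r$ with a single double pole at $\xi_k$. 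Shrinking $r$, we may also assume $R$ is analytic on $|z|\le r$.

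Write $1-\Phi_k(z)=c(\xi_k-z)(1+O(\xi_k-z))$ with $c=\Phi_k'(\xi_k)>0$. The principal part of $G$ at $\xi_k$ is then
\[
\frac{1}{c^2(\xi_k-z)^2}+\frac{a}{\xi_k-z}
\]
for some constant $a$. Subtracting this principal part leaves a function analytic in $|z|<r$, whose coefficients are $O(r'^{-n})$ for any $\xi_k<r'<r$. Extracting coefficients gives
\[
[z^n]G(z)=\frac{(n+1)\xi_k^{-n-2}}{c^2}\bigl(1+O(1/n)\bigr).
\]
The same argument applies to $R\,G$. Its principal part at $\xi_k$ has leading term $R(\xi_k)/(c^2(\xi_k-z)^2)$, and the corrections, including the term involving $R'(\xi_k)$, are of order $\xi_k^{-n}$ rather than $n\xi_k^{-n}$. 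Therefore
\[
[z^n]R\,G=R(\xi_k)\,c^{-2}(n+1)\xi_k^{-n-2}\bigl(1+O(1/n)\bigr).
\]
This holds even when $R(\xi_k)=0$, since then both sides are $o(n\xi_k^{-n})$.

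The denominator $\beta_k(n)$ is the $z^n$ coefficient of $\Phi_k\,G$. Applying the previous step with $R=\Phi_k$ and using $\Phi_k(\xi_k)=1$ gives
\[
\beta_k(n)\sim c^{-2}n\,\xi_k^{-n}.
\]
Taking the ratio of the two asymptotics yields \eqref{eq:ratio}.

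For the local statement, fix a window consisting of $a$ trees to the left of the marker, the marked tree, and $b$ trees to the right, with prescribed trees $S_{-a},\dots,S_b$. Marked forests in $\BB(n,k)$ with this window decompose uniquely as
\[
(\text{arbitrary forest})\,S_{-a}\cdots S_b\,(\text{arbitrary forest}).
\]
Their generating function is therefore $z^{\sum|S_i|}(1-\Phi_k)^{-2}$. Applying \eqref{eq:ratio} with $R=z^{\sum|S_i|}$ shows that the proportion of such forests tends to $\prod_i\xi_k^{|S_i|}$, which is the claimed product measure. Forests with fewer than $a$ trees to the left or fewer than $b$ trees to the right of the marker are counted by generating functions of the form $R\,(1-\Phi_k)^{-1}$ with $R$ analytic. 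Their coefficients are $O(\xi_k^{-n})$, so they form an asymptotically negligible fraction of $\beta_k(n)$.

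The only delicate point is the exclusion of other zeros of $1-\Phi_k$ on the circle $|z|=\xi_k$. This is handled by the aperiodicity argument above: the simultaneous presence of the monomials $z$ and $z^2$ forces any zero on that circle to be positive real. Everything else is routine transfer of coefficient asymptotics from a double pole.
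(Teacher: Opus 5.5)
Your proof is correct and follows the paper's route: you locate the unique dominant double pole of $(1-\Phi_k)^{-2}$ at $\xi_k$, compare leading Laurent coefficients (the paper cites Flajolet--Sedgewick, Theorem IV.10, where you compute the principal part explicitly), take $R=z^{\sum|S_i|}$ for the window, and dismiss forest ends as simple-pole contributions. Two cosmetic slips do not affect the ratio. Your own previous step with $R=\Phi_k$ gives $\beta_k(n)\sim c^{-2}n\,\xi_k^{-n-2}$, not $c^{-2}n\,\xi_k^{-n}$. And when $R(\xi_k)=0$ the error term should be written additively as $O(\xi_k^{-n})$ rather than as a factor $(1+O(1/n))$.
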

\begin{proof}
The numerator and denominator have the same unique dominant double pole, and their leading Laurent coefficients differ by the factor $R(\xi_k)$, since $\Phi_k(\xi_k)=1$. Comparing coefficients proves \eqref{eq:ratio}; this is the elementary meromorphic coefficient estimate in \cite[Theorem IV.10]{FS}. For a prescribed window $T_{-a},\ldots,T_b$ the numerator is $z^{\sum_i|T_i|}$. Its value at $\xi_k$ is the product of the stated probabilities. Missing windows at forest ends have only a simple pole and zero limiting proportion.
\end{proof}

At the pole, the category probabilities are
\begin{equation}\label{eq:finiteweights}
e_k=\xi_k,\quad h_k=1-\sqrt{1-\xi_k},\quad
\ell_k=\sqrt{1-\xi_k}-\xi_k.
\end{equation}
Their limits are
\begin{equation}\label{eq:weights}
e=\frac14,\qquad h=1-\frac{\sqrt3}2,\qquad
\ell=\frac{\sqrt3}2-\frac14=1-e-h.
\end{equation}
The weight of $C$ tends to $e^2$. Reversing the recurrence \eqref{eq:phi} once more gives
\begin{equation}\label{eq:j}
\Phi_{k-1}(\xi_k)-\Phi_{k-2}(\xi_k)\longrightarrow
j=\frac{\sqrt3-\sqrt{2\sqrt3-1}}2.
\end{equation}
This is the total weight of $\JJ$. Summing over all trees $J$ of exact height $k-1$, each of the ordered root forms $(J,E)$ and $(E,J)$ has limiting weight $ej$.

\begin{lemma}[Segment rewards]\label{lem:segments}
Fix a reward depending on the category word of a segment. Sum it once over each segment of each underlying forest represented in $\BB(n,k)$, and divide by $|\BB(n,k)|$. If the absolute reward is bounded by a constant times the segment length, this normalized sum converges to the corresponding mean reward per position in the product category process. This holds first at fixed $k$ and then as $k\to\infty$. The same assertion holds for the finitely many root-pattern conditions used below.
\end{lemma}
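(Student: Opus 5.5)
The plan is to reduce the segment sum to a sum over marked positions and then invoke Lemma~\ref{lem:ratio}. Assign the reward of each segment to a canonical position: the segment's first position, or, for the empty segment between adjacent guards, the left guard. Since every underlying forest with $m$ trees appears in $\BB(n,k)$ exactly $m$ times, once per marker position, summing over forests is the same as summing over marked forests with the weight $\ind[\text{marker is the canonical position of a segment}]\cdot(\text{reward})$. Dividing by $|\BB(n,k)|=\beta_k(n)$ turns the normalized sum into the expectation of this local functional under a uniform marked forest. The functional is not of bounded window, however, because a segment can be long.

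Truncate at length $N$. For segments of length at most $N$, the functional depends only on the window of at most $N+2$ trees starting at the left guard. Decomposing by the finitely many category words of length at most $N$, each term is a coefficient ratio of the form $[z^n]R(z)(1-\Phi_k)^{-2}/\beta_k(n)$. Here $R$ is a polynomial in $e_k(z)=z$, $h_k(z)=\Phi_k-\Phi_{k-1}$ and $\ell_k(z)=\Phi_{k-1}-z$. Lemma~\ref{lem:ratio} therefore gives convergence to the product-process value $\sum_W r(W)\,\ell\,\pi(W)\,\ell$. The root-pattern conditions (categories $\CC$, $\JJ$, or root forms $(J,E)$) refine only finitely many factors into other polynomials in $z$, $\Phi_{k-1}$ and $\Phi_{k-2}$, so they are handled identically. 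End effects at the two forest ends carry a simple pole only and vanish.

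The tail of segments of length greater than $N$ is controlled using $|r(W)|\le c|W|$. Charging each segment's reward to its positions bounds the tail by $c$ times the expected number of positions lying in $\{\E,\HH\}$-runs of length greater than $N$. That expectation is again a coefficient ratio. It is dominated termwise by the product-process quantity $\sum_{m>N} m\,(e_k+h_k)^m\ell_k^2$, which decays geometrically because $e_k+h_k<1$ uniformly; this also uses $\ell_k\to\ell>0$. I expect this uniformity step to be the main obstacle. Lemma~\ref{lem:ratio} is only pointwise in $n$, so I would bound the tail directly at finite $n$. Doing so means writing the generating function of marked forests whose marker starts a run of length greater than $N$ as $\sum_{m>N} m\,(z+h_k(z))^m\ell_k(z)^2(1-\Phi_k)^{-2}$-type terms, which is analytic for $|z|<\xi_k+\delta$ with $\delta$ independent of $N$. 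A Cauchy estimate on the circle $|z|=\xi_k(1+\delta')$ after extracting the double pole then gives a bound $C\sum_{m>N}m\theta^m$ uniform in $n$, with $\theta<1$. Letting $n\to\infty$ and then $N\to\infty$ proves convergence at fixed $k$.

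For $k\to\infty$ I would note that $e_k,h_k,\ell_k$ (and the $\CC,\JJ$ weights via \eqref{eq:j}) converge to \eqref{eq:weights}. The product-process mean reward per position is the quantity $\sum_W r(W)\ell^2\pi(W)$ divided by the mean number of positions per segment. It is a series dominated uniformly in $k\ge3$ by $\sum m\theta^m$, so dominated convergence passes the limit through. The normalization "per position" is the ratio with the analogous count having reward $|W|$, whose convergence is the special case $r(W)=|W|$.
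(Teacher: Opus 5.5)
Your proof is essentially the paper's. You truncate segment lengths and apply Lemma~\ref{lem:ratio} to the finitely many short words, with root patterns entering as products or differences of tree polynomials. You dominate the absolute tail by $c$ times the nonnegative series $\ell_k(z)^2\sum_{m>N}m\,(z+\Phi_k(z)-\Phi_{k-1}(z))^m(1-\Phi_k(z))^{-2}$, and you get uniformity in large $k$ from $\ell_k\to\ell>0$.

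The step you call the main obstacle is not one. That series is analytic near $|z|\le\xi_k$, so Lemma~\ref{lem:ratio} applied to it at each fixed $N$ already bounds $\limsup_{n\to\infty}$ of the normalized tail by $c\,\ell_k^2\sum_{m>N}m(e_k+h_k)^m$. Letting $N\to\infty$ afterwards finishes the argument, and this is exactly how the paper proceeds. Your Cauchy estimate uniform in $n$ is valid but unnecessary.

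One correction: delete the re-normalization in your last paragraph. Since $|\BB(n,k)|$ counts every marker position, guards included, the limit $\ell^2\sum_W r(W)\wt(W)$ from your second paragraph is already the mean reward per position. Dividing by the count with reward $|W|$, whose limit is $\ell^2\sum_m m(e+h)^m=e+h$, would instead give the reward per $\{\E,\HH\}$-position. It would also misstate per-vertex quantities such as $\omega$ and $\rho$ by that factor.
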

\begin{proof}
At fixed $k$ the weight of a word of length $m$ over $\E,\HH$, summed over all such words, is $(e_k+h_k)^m$. The two $\LL$ guards multiply this by $\ell_k^2$. The tails of a length-weighted reward are therefore summable. One can also apply Lemma~\ref{lem:ratio} directly: replacing each letter weight by $z$ or $\Phi_k(z)-\Phi_{k-1}(z)$ gives a rational finite-state series, analytic at and near $|z|\le\xi_k$, because its absolute transition row sums there are at most $e_k+h_k<1$. More explicitly, the absolute tail is bounded by a constant times the series $\sum_{m>M}m(z+\Phi_k(z)-\Phi_{k-1}(z))^m$, multiplied by the guard polynomials. This series is analytic on the same neighbourhood. Lemma~\ref{lem:ratio} bounds the limiting tail by its value at $\xi_k$, which tends to zero as $M\to\infty$. First truncate segment lengths, apply the fixed-window result, and then let $M\to\infty$. Since $\ell_k\to\ell>0$, the argument is uniform for all sufficiently large $k$. Specifying a finite root pattern contributes a product or difference of tree generating polynomials and does not change the tail estimate. Segments meeting a forest end give only a simple-pole contribution, hence a vanishing fraction of marked vertices.
\end{proof}

A horizontal move is absent only at the corresponding forest end. A split is absent precisely at a trivial marked tree. For a finite group subset, the number of missing moves by a generator equals the number by its inverse. Since $\alpha_k(n)/\beta_k(n)\to0$, Lemma~\ref{lem:ratio} consequently gives
\begin{equation}\label{eq:base}
\lim_{n\to\infty}\dens(\BB(n,k))=4-2\xi_k,
\qquad \lim_{k\to\infty}(4-2\xi_k)=\frac72.
\end{equation}

\section{Interval deletion and exact enumeration}\label{sec:horizontal}
We first specify which vertices are deleted, and then count them. Throughout this section, a word records the categories of consecutive trees in a fixed forest. Selecting a position in that word means deleting the vertex whose marker is at the corresponding tree; it does not mean removing the tree from the forest.

\subsection{The score of an interval}
Fix a segment with category word $W=b_1\cdots b_m\in\{\E,\HH\}^m$ between two $\LL$ guards. Number its positions from $1$ to $m$, and put $b_{m+1}=\LL$ for its right guard. Every marker in the segment has both horizontal neighbours. A marker at a height-$k$ tree also has its split neighbour, but no merge neighbour. A marker at $E$ has no split neighbour; its merge neighbour is available precisely when the next tree is not in $\HH$. Consequently its degree in the original graph $\BB(n,k)$ is
\[
3-\ind_{(b_i,b_{i+1})=(\E,\HH)}.
\]
For an interval $[u,v]$ of $d=v-u+1$ positions, let $m([u,v])$ count the $\E\HH$ transitions whose first position lies in the interval. This includes a transition from $v$ to $v+1$. The degree sum is $3d-m([u,v])$, and there are $d-1$ internal horizontal edges. There are no internal vertical edges, because a split or merge changes the underlying forest. The interval is therefore incident to $2d+1-m([u,v])$ undirected edges.

For a set $X$ of $d$ vertices incident to $c$ undirected edges, define its score relative to the benchmark $7/2$ by
\begin{equation}\label{eq:score}
W(X)=7d-4c.
\end{equation}
The coefficient $4$ accounts for the fact that density counts each undirected edge twice. Set
\begin{equation}\label{eq:localvalues}
a_i=\begin{cases}3,&(b_i,b_{i+1})=(\E,\HH),\\-1,&\text{otherwise}.
\end{cases}
\qquad W([u,v])=\sum_{i=u}^v a_i-4.
\end{equation}
In particular, $a_m=-1$. The last term in the interval score is a cost of four for starting an interval.

\subsection{A deterministic choice of deleted positions}\label{sec:deletion-rule}
For $0\le i\le m$, let $O_i$ be the maximum total score of a family of disjoint intervals in $[1,i]$, allowing the empty family. Let $I_i$ be the maximum among such families with an interval ending at $i$. The letter $I$ records this final interval, which may be extended at the next step; $O_i$ does not require position $i$ to be outside the chosen intervals. The recurrences are
\begin{equation}\label{eq:schedule}
I_i=\max(O_{i-1}+a_i-4,I_{i-1}+a_i),\qquad
O_i=\max(O_{i-1},I_i),
\end{equation}
with $O_0=0$ and $I_0=-\infty$. The first term starts a new final interval, and the second extends the previous final interval.

The numerical maxima alone do not specify which positions to delete. We therefore also define a candidate set $\mathcal C_i$ attaining $I_i$ and an accepted set $\mathcal D_i$ attaining $O_i$. Start with $\mathcal C_0=\mathcal D_0=\varnothing$, and use
\begin{align}\label{eq:selected-sets}
\mathcal C_i&=
\begin{cases}
\mathcal C_{i-1}\cup\{i\},&I_{i-1}\ge O_{i-1}-4,\\
\mathcal D_{i-1}\cup\{i\},&I_{i-1}<O_{i-1}-4,
\end{cases}\notag\\
\mathcal D_i&=
\begin{cases}
\mathcal C_i,&I_i\ge O_{i-1},\\
\mathcal D_{i-1},&I_i<O_{i-1}.
\end{cases}
\end{align}
Thus a tie in the first maximum continues the final interval, and a tie in the second accepts the candidate. We call a position \emph{accepted} once it belongs to $\mathcal D_i$. For the complete word $W$, the deletion set is
\[
\mathcal D(W)=\mathcal D_m.
\]
The connected components of this set are the selected intervals. Apply this rule separately to every segment with two $\LL$ guards. All $\LL$ positions, and all positions in segments meeting a forest end, are retained. Let $\Gh$ denote the induced subgraph remaining after these deletions.

\begin{lemma}[Properties of the deletion rule]\label{lem:selection}
The rule maximizes the total interval score with the stated tie conventions. For every $i$, one has $\mathcal D_{i-1}\subseteq\mathcal D_i\subseteq\mathcal C_i$. Every selected interval begins and ends at a position with $a_i=3$, and distinct selected intervals are not adjacent. The final position of a segment, and an initial $\HH$ position, are retained.
\end{lemma}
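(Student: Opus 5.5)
The plan is a single induction on $i$ over the recurrences \eqref{eq:schedule} and \eqref{eq:selected-sets}. The invariant $P(i)$ has three parts. First, $I_i$ and $O_i$ are the maxima defined in the text. Second, $\mathcal C_i$ is a disjoint interval family in $[1,i]$ whose final interval ends at $i$, with score $I_i$. Third, $\mathcal D_i$ is a disjoint interval family in $[1,i]$ with score $O_i$. Here the score counts each maximal run of selected positions as one interval.

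\emph{Optimality and attainment.} For the maxima, take an optimal family ending at $i$ and remove position $i$. Either what remains still ends at $i-1$ inside the same interval, or $i$ formed a singleton start. This gives the two branches of $I_i$, and $O_i$ is then immediate. For attainment, the branch chosen in \eqref{eq:selected-sets} realises the corresponding term of the maximum.

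One subtlety in attainment comes from the branch $\mathcal D_{i-1}\cup\{i\}$, which is scored as $O_{i-1}+a_i-4$. If $i-1\in\mathcal D_{i-1}$, the new position would fuse with an existing interval, and the cost $4$ would be overcounted. This is excluded by the non-adjacency argument below, which uses only $P(i-1)$ and the accepted-time observation, so it is proved inside the same induction. Optimality over all $i\le m$ then gives the first claim.

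\emph{Containments.} By induction assume $\mathcal D_{i-1}\subseteq\mathcal C_{i-1}$. Then both options for $\mathcal C_i$ contain $\mathcal D_{i-1}$: they are $\mathcal C_{i-1}\cup\{i\}$ and $\mathcal D_{i-1}\cup\{i\}$. Hence $\mathcal D_{i-1}\subseteq\mathcal C_i$. Both options for $\mathcal D_i$ are $\mathcal C_i$ and $\mathcal D_{i-1}$, and each lies between $\mathcal D_{i-1}$ and $\mathcal C_i$. This gives $\mathcal D_{i-1}\subseteq\mathcal D_i\subseteq\mathcal C_i$.

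\emph{Acceptance only at $a_i=3$.} If $a_i=-1$, then
\[
I_i=\max(O_{i-1}-5,\,I_{i-1}-1)<O_{i-1},
\]
using $I_{i-1}\le O_{i-1}$. Hence $\mathcal D$ changes only at positions with $a_i=3$.

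\emph{Right endpoints.} Let $j$ be the last point of a component of $\mathcal D_m$. Let $i\ge j$ be the step at which $j$ was first accepted, so that $\mathcal D_i=\mathcal C_i\ni i$. The final interval of $\mathcal C_i$ contains $j$ and ends at $i$. By the containments, every position in $[j,i]$ persists in $\mathcal D_m$. Since $j$ is the end of its component, this forces $i=j$, and therefore $a_j=3$.

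\emph{Non-adjacency.} A new interval starts at $i$ only when $I_{i-1}<O_{i-1}-4$. Suppose $i-1\in\mathcal D_{i-1}$. By the previous paragraph, $i-1$ was accepted at step $i-1$, so $O_{i-1}=I_{i-1}$. This contradicts the starting condition. So distinct selected intervals are never adjacent.

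\emph{Left endpoints.} Suppose a component $[u,v]$ has $a_u=-1$. Then $v>u$, because right endpoints have $a=3$. At step $u$ a new interval started, so $I_u=O_{u-1}-5$. The previous step shows $O_u=O_{u-1}$. Then $I_u<O_u-4$, so step $u+1$ must start a new interval and cannot extend the one containing $u$. Non-adjacency then forbids $u$ and $u+1$ from lying in the same component, which is a contradiction.

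\emph{Boundary claims.} The final position satisfies $a_m=-1$ because $b_{m+1}=\LL$, so it is not a right endpoint and is never selected. An initial $\HH$ position has $a_1=-1$. Any component containing it would have to start at position $1$, which the left-endpoint argument excludes.

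I expect the main obstacle to be the tie and attainment bookkeeping. Specifically, one must check that $\mathcal D_{i-1}\cup\{i\}$ really has score $O_{i-1}+a_i-4$, which is exactly the non-adjacency statement. The plan is therefore to prove non-adjacency inside the same induction as attainment, rather than after it.
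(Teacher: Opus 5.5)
Your argument is correct, but it takes a different route from the paper's.

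The paper treats a family as a list of intervals. Under that reading, the branch $\mathcal D_{i-1}\cup\{i\}$ has score $O_{i-1}+a_i-4$ by definition, so attainment is immediate. The structural claims then come from an exchange argument applied to the maximizer $\mathcal D_m$. An endpoint with $a_i=-1$ could be trimmed, and two adjacent intervals could be merged to save the start cost $4$; each change strictly raises the score, so neither configuration occurs. The boundary claims follow from $a_m=-1$ and from $a_1=-1$ for an initial $\HH$.

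You instead score sets by maximal runs. This makes non-adjacency part of attainment, a point you correctly isolate. In fact $i-1\in\mathcal D_{i-1}$ would already give $O_{i-1}\le I_{i-1}$ by maximality of $I_{i-1}$. You then derive the endpoint claims from the dynamics of the recurrence:
\begin{itemize}
\item no acceptance occurs at steps with $a_i=-1$;
\item a right endpoint is first accepted at its own step;
\item a left endpoint with $a_u=-1$ is discarded at step $u+1$.
\end{itemize}
This is longer, but it isolates exactly the dynamical facts the paper later relies on in Section~\ref{sec:firstpassage}.

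Two of your assertions need a line of justification:
\begin{itemize}
\item that a position $j$ first accepted at step $i$ lies in the final run of $\mathcal C_i$;
\item that the left endpoint $u$ of a component began a new interval at step $u$.
\end{itemize}
Both follow from two observations. First, $\mathcal C_i=\mathcal D_{i'-1}\cup[i',i]$, where $i'$ is the most recent step using the second branch. Second, a position $p\le i_0$ missing from $\mathcal C_{i_0}$ is missing from every later $\mathcal C_i$ and $\mathcal D_i$.

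That second observation, rather than non-adjacency, is what closes your left-endpoint argument: $u\notin\mathcal D_u$ gives $u\notin\mathcal C_{u+1}$, so $u$ is never accepted. Alternatively, since you have already proved that $\mathcal D_m$ is a maximizer, you could dispose of left endpoints by trimming, exactly as the paper does.
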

\begin{proof}
A family counted by $I_i$ has a final interval starting at $i$, or a final interval extending an interval ending at $i-1$. These two cases give the first recurrence. For $O_i$, a maximizing family omits $i$ or has an interval ending there, giving the second recurrence. The set choices in \eqref{eq:selected-sets} realize these maxima.

The inclusion $\mathcal D_i\subseteq\mathcal C_i$ follows inductively. Starting a new candidate includes $\mathcal D_{i-1}$ explicitly, and extending the previous candidate includes it by the induction hypothesis. Accepting a candidate therefore never removes an already accepted position. An interval endpoint of value $-1$ could be removed to increase the score, and adjacent intervals could be joined to save a start cost of four. Such endpoints and adjacent intervals cannot occur in a maximizing family. Since the last value is $-1$, the final position is retained. An initial $\HH$ also has value $-1$ and cannot be covered by an interval starting earlier in the segment.
\end{proof}

\begin{example}[From the recurrence to three Cayley vertices]\label{ex:basic-deletion}
Take $W=\E\HH\E\HH$. The values are $(3,-1,3,-1)$, and the complete calculation is
\begin{center}
\begin{tabular}{ccccccc}
\toprule
$i$&$b_i$&$a_i$&$I_i$&$O_i$&$\mathcal C_i$&$\mathcal D_i$\\
\midrule
0&---&---&$-\infty$&0&$\varnothing$&$\varnothing$\\
1&$\E$&3&$-1$&0&$\{1\}$&$\varnothing$\\
2&$\HH$&$-1$&$-2$&0&$\{1,2\}$&$\varnothing$\\
3&$\E$&3&1&1&$\{1,2,3\}$&$\{1,2,3\}$\\
4&$\HH$&$-1$&0&1&$\{1,2,3,4\}$&$\{1,2,3\}$\\
\bottomrule
\end{tabular}
\end{center}
Thus $\mathcal D(W)=\{1,2,3\}$, not $\{1,2,3,4\}$: the final candidate is not accepted.

For a concrete forest, take $k=3$, $C=(E,E)$, $J=(C,C)$ and $H=(J,J)$. The forest
\[
\mathcal F=(C,E,H,E,H,C)
\]
has the segment $W$ between its two $C\in\LL$ guards. The rule deletes precisely the three vertices $v(\mathcal F,2)$, $v(\mathcal F,3)$ and $v(\mathcal F,4)$ from this forest's horizontal path. Their degrees in $\BB(n,3)$ are $(2,3,2)$; their two internal edges give $c=2+3+2-2=5$ and score $7\cdot3-4\cdot5=1$. All six trees remain in each of these vertex descriptions. It is the marker position that distinguishes the three vertices.
\end{example}

\begin{example}[A tie changes the deletion set]\label{ex:zero-score}
For $W=\E\HH\E\E\HH$, the values are $(3,-1,-1,3,-1)$. The candidate scores $I_i$ are $-1,-2,-3,0,-1$, and all accepted scores $O_i$ are zero. At $i=4$, the equality $I_4=O_3$ causes the rule to accept $\{1,2,3,4\}$. Hence
\[
\mathcal D(\E\HH\E\E\HH)=\{1,2,3,4\},
\qquad W([1,4])=0.
\]
Selecting only intervals of strictly positive score would therefore define a different deletion rule. This zero-score example will account for a deleted split neighbour in Section~\ref{sec:triples}.
\end{example}

\begin{proposition}[Exact horizontal accounting]\label{prop:horizontal}
No vertical edge has two horizontally deleted endpoints. More specifically, the split neighbour of every marker at a height-$k$ tree is horizontally retained. If $\rho$ is the limiting fraction of deleted vertices and $\omega$ is the limiting total interval score divided by the original number of vertices, then the horizontal induced subgraph has limiting density
\begin{equation}\label{eq:Dh}
D_{\mathrm h}=\frac72+\frac{\omega}{2(1-\rho)}.
\end{equation}
The limits are taken first as $n\to\infty$ and then as $k\to\infty$.
\end{proposition}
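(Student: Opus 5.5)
The plan has two parts: a combinatorial statement about vertical edges, and a counting identity that turns the interval scores into the density formula \eqref{eq:Dh}.

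\emph{Step 1: the vertical claim.} I will identify every vertical edge with an endpoint in a segment. A vertical edge joins a marker at a tree $T$ with $\htree(T)\ge1$ to the marker at its left child after splitting. Equivalently, it joins a marker at $T_0$ in $(\ldots,T_0,T_1,\ldots)$ to the merged forest. There are two cases with the marked tree inside a segment. First, a split of a marked $H\in\HH$: the new marked tree is the left child of $H$. Both children have height at most $k-1$, and at least one has height exactly $k-1\ge2$, so both are nontrivial or one is $\LL$. Second, a merge of a marked $E$ with a next tree of category $\E$ or $\LL$: the result has height between $1$ and $k-1$, hence category $\LL$. So the only case needing care is the split of an $\HH$ tree. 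After the split, the left child is marked. It is $E$ or in $\LL$, since its height is at most $k-1$. If it is in $\LL$, the neighbour sits at an $\LL$ position and is retained, because only segment positions are ever deleted. If it is $E$, the right child must have height $k-1$, so it lies in $\LL$. The new marker then sits at an $E$ whose right neighbour is an $\LL$ guard, so it is the final position of its new segment. By Lemma~\ref{lem:selection}, it is retained. Hence every split neighbour of an $\HH$ marker is retained. Merge neighbours of deleted $E$ markers lie at $\LL$ positions and are also retained. Therefore no vertical edge has two deleted endpoints.

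\emph{Step 2: counting.} Let $V=\BB(n,k)$, with $N=|V|$ and original edge count $|E(V)|$. Let $X$ be the deleted set, of size $d_X$, and let $c_X$ be the number of edges incident to $X$. The subgraph $\Gh$ has $N-d_X$ vertices and $|E(V)|-c_X$ edges. The deleted set is a disjoint union of non-adjacent intervals in different or separated positions. No edge joins two distinct selected intervals: horizontally because they are non-adjacent (Lemma~\ref{lem:selection}), and vertically by Step 1. So $c_X=\sum c(\text{interval})$, and by \eqref{eq:score} summing gives $7d_X-4c_X=\sum W([u,v])$. Then
\[
\dens(\Gh)=\frac{2|E(V)|-2c_X}{N-d_X}=\frac{2|E(V)|-\tfrac72 d_X+\tfrac12\sum W}{N-d_X}
=\frac72+\frac{2|E(V)|-\tfrac72N+\tfrac12\sum W}{N-d_X}.
\]

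\emph{Step 3: limits.} By \eqref{eq:base}, $(2|E(V)|-\tfrac72N)/N\to\tfrac12-2\xi_k$ as $n\to\infty$, and this tends to $0$ as $k\to\infty$. The interval score and the deleted count of a segment depend only on its category word. Both are bounded in absolute value by a constant times the segment length, since each $|a_i|\le4$ and the start cost is charged at most once per position. So Lemma~\ref{lem:segments} gives $d_X/N\to\rho$ and $\sum W/N\to\omega$, first at fixed $k$ and then as $k\to\infty$. Segments at the forest ends are not processed and contribute nothing. Dividing numerator and denominator by $N$ yields \eqref{eq:Dh}.

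I expect the main subtlety to be Step 1 in the case where the left child is $E$. There one must check that the new forest's segment really ends at that $E$, that is, that the right child has category $\LL$ rather than $\HH$. This holds because its height is exactly $k-1<k$, with $k\ge3$ guaranteeing it is nontrivial. The retention then follows from the ``final position'' clause of Lemma~\ref{lem:selection}.
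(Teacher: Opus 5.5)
Your proof is correct and follows the paper's route. You do the $\HH$-split case analysis (left child in $\LL$, or left child $E$ made final by a right child of height $k-1$), then use additivity of incident-edge counts over non-adjacent intervals, and pass to the limits via \eqref{eq:base} and Lemma~\ref{lem:segments}. One slip: merging a marked $E$ with an $\LL$ tree of height $k-1$ gives a tree in $\HH$, not $\LL$. This is harmless, because such an $E$ is the final position of its segment and is retained, so a deleted $E$ is always followed by $\E$ or $\HH$, as the paper notes. That edge is in any case the $T_0=E$ split you already treat.
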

\begin{proof}
A deleted $E$ has no split neighbour. If followed by $E$, its merge neighbour has marker at $C\in\LL$ and is retained; if followed by $\HH$, no merge is available. It cannot be the final position before an $\LL$ guard.

A height-$k$ tree $H=(T_0,T_1)$ cannot merge within the height bound. If $T_0$ is nontrivial, splitting $H$ puts the marker at a tree in $\LL$, which is retained. If $T_0=E$, then $T_1$ has height $k-1$, so the split marker is at the last $E$ before an $\LL$ guard and is retained. This also proves the more specific assertion, whether or not the original marker at $H$ was deleted.

By Lemma~\ref{lem:selection}, the only edges with two deleted endpoints are the horizontal edges inside the selected intervals. Thus incident-edge counts add over all selected intervals. Summing \eqref{eq:score}, the loss of doubled edge count per original vertex tends to $(7\rho-\omega)/2$. Subtract this from the limiting density $7/2$ in \eqref{eq:base} and divide by the retained fraction $1-\rho$, obtaining \eqref{eq:Dh}.
\end{proof}

Score optimality is not density optimality. If a finite graph of density $D$ on $N$ vertices loses $r<N$ vertices incident to $c$ undirected edges, then, with $W=7r-4c$,
\begin{equation}\label{eq:ratio-objective}
D_{\rm new}-D=\frac{W+2(D-7/2)r}{2(N-r)}.
\end{equation}
The rule optimizes the additive score, not this ratio. Our lower bound needs only the specified construction and its exact enumeration.

\subsection{What the prefix state records}\label{sec:prefix-states}
For the enumeration, allow arbitrary $e,h>0$ with $e+h<1$, and put $\ell=1-e-h$. Letters $\E,\HH,\LL$ have probabilities $e,h,\ell$ in the product category process. For a finite word $P$ over $\{\E,\HH\}$, define
\[
\wt(P)=e^{\#\E(P)}h^{\#\HH(P)},\qquad \wt(\varepsilon)=1.
\]

A \emph{prefix} is the portion $P=b_1\cdots b_r$ of a segment read so far, beginning immediately after its left $\LL$ guard. For a nonempty prefix, scores have been processed only through position $r-1$: the score $a_r$ cannot yet be determined because $b_{r+1}$ has not been read. We call $b_r$ the \emph{pending letter}. The state records this letter and the clipped score difference
\[
\widehat d(P)=\max\{-4,I_{r-1}-O_{r-1}\}.
\]
For $r=1$ this uses $I_0=-\infty$ and $O_0=0$. Thus reading the first letter does not process any score.

Suppose a nonempty prefix ends in $b$, and the next letter read is $\sigma\in\{\E,\HH\}$. Put $a(b,\sigma)=3$ for $(b,\sigma)=(\E,\HH)$ and $a(b,\sigma)=-1$ otherwise. One step processes the score of $b$, with trial difference and accepted-score gain
\begin{equation}\label{eq:state-update}
x=\widehat d+a(b,\sigma),\qquad g=\max(0,x).
\end{equation}
The new pending letter is $\sigma$, and the new clipped difference is $\max(-4,x-g)$. This follows directly from \eqref{eq:schedule}, since $I_r-O_{r-1}=\max(-4,I_{r-1}-O_{r-1})+a_r$.

The resulting nine states and their transitions are shown in Table~\ref{tab:prefix}. Write $\state(P)$ for the row reached after reading $P$. The empty prefix has state 0 by convention; its two initial transitions simply read the first letter and have gain zero. For nonempty words, state 0 represents an actual pending $\HH$ with clipped difference $-4$. When the right guard $\LL$ is read, the pending score is $-1$ and the segment ends; this final step never increases $O_i$.

\begin{table}[htbp]
\centering
\caption{States after a prefix has been read but before its last letter has been processed. Each transition gives the next row and the increase in $O$. The entry $-4$ is a clipped value, representing every exact difference at most $-4$.}\label{tab:prefix}
\begin{tabular}{clcc}
\toprule
$i$&$(\widehat d,\text{pending letter})$&read $\E$&read $\HH$\\
\midrule
0&$(-4,\HH)$; also the empty prefix&$1;0$&$0;0$\\
1&$(-4,\E)$&$1;0$&$2;0$\\
2&$(-1,\HH)$&$3;0$&$4;0$\\
3&$(-2,\E)$&$5;0$&$6;1$\\
4&$(-2,\HH)$&$5;0$&$7;0$\\
5&$(-3,\E)$&$1;0$&$6;0$\\
6&$(0,\HH)$&$8;0$&$2;0$\\
7&$(-3,\HH)$&$1;0$&$0;0$\\
8&$(-1,\E)$&$3;0$&$6;2$\\
\bottomrule
\end{tabular}
\end{table}

For example, reading the successive prefixes of $\E\HH\E\HH$ gives states
\[
\state(\varepsilon)=0,\quad
\state(\E)=1,\quad
\state(\E\HH)=2,\quad
\state(\E\HH\E)=3,\quad
\state(\E\HH\E\HH)=6.
\]
The last transition has gain one: reading the fourth letter processes position 3 and accepts the three positions in Example~\ref{ex:basic-deletion}. In particular, $\state(P)$ always refers to the word $P$ itself, with its last letter still pending, not to the word after its right guard has been processed.

\subsection{Summing the prefix weights}\label{sec:prefix-weights}
Define
\[
G_i=\sum_{\substack{P\in\{\E,\HH\}^*\\\state(P)=i}}\wt(P).
\]
These are sums over all finite possible prefixes, not over prefixes of one fixed word, and not probabilities. The empty word contributes one to $G_0$. If $\delta_\sigma(i)$ denotes the next row in Table~\ref{tab:prefix}, decomposition by the last appended letter gives
\begin{equation}\label{eq:prefixbalance}
G_i=\ind_{i=0}+\sum_{\delta_\E(t)=i}eG_t+\sum_{\delta_\HH(t)=i}hG_t.
\end{equation}
Put
\begin{equation}\label{eq:Delta}
\Delta=1-eh(e^2+eh+h^2),\qquad c=1-eh(1+e+e^2).
\end{equation}
In particular $\Delta>0$, since $eh(e^2+eh+h^2)\le(e+h)^4/4<1$.

\begin{lemma}\label{lem:G}
The unique solution of \eqref{eq:prefixbalance} is
\begin{align}\label{eq:Gclosed}
G_2&=\frac{ehc}{\ell\Delta},&G_6&=\frac{e^2h^2(1+e+h)}{\ell\Delta},\notag\\
G_4&=hG_2,&G_7&=hG_4,&G_8&=eG_6,\\
G_3&=e(G_2+G_8),&G_5&=e(G_3+G_4),\notag\\
G_1&=G_2/h-G_6,&G_0&=\frac{1+hG_7}{1-h}.\notag
\end{align}
Moreover $\sum_iG_i=1/\ell$.
\end{lemma}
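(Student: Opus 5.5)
The plan is to turn Table~\ref{tab:prefix} into an explicit linear system and then solve it by elimination. The first step is to read off, for each row $i$, its incoming transitions. Row $0$ is entered by $\HH$ from rows $0$ and $7$. Row $1$ is entered by $\E$ from rows $0,1,5,7$. Row $2$ is entered by $\HH$ from rows $1$ and $6$. Row $3$ is entered by $\E$ from rows $2$ and $8$. Row $4$ is entered by $\HH$ from row $2$ only. Row $5$ is entered by $\E$ from rows $3$ and $4$. Row $6$ is entered by $\HH$ from rows $3,5,8$. Row $7$ is entered by $\HH$ from row $4$. Row $8$ is entered by $\E$ from row $6$. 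Substituting these into \eqref{eq:prefixbalance} gives nine equations. Several of them are already the stated relations:
\[
G_4=hG_2,\quad G_7=hG_4,\quad G_8=eG_6,\quad G_3=e(G_2+G_8),\quad G_5=e(G_3+G_4).
\]
The row-$2$ equation $G_2=h(G_1+G_6)$ rearranges to $G_1=G_2/h-G_6$. The row-$0$ equation $G_0=1+hG_0+hG_7$ rearranges to $G_0=(1+hG_7)/(1-h)$.

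For uniqueness, write the system as $G=\mathbf e_0+M^{\mathsf T}G$. Here $M$ is the weighted transition matrix, whose rows have exactly one $\E$ entry $e$ and one $\HH$ entry $h$, so every row sum is $e+h<1$. Hence the spectral radius of $M$ is at most $e+h<1$, and $I-M^{\mathsf T}$ is invertible. The solution is therefore unique, and it equals the convergent sum over all words, which identifies it with the definition of $G_i$. The identity $\sum_iG_i=1/\ell$ is immediate, without solving anything: every finite word over $\{\E,\HH\}$ has exactly one state. Hence
\[
\sum_iG_i=\sum_P\wt(P)=\sum_{m\ge0}(e+h)^m=\frac1\ell.
\]
Equivalently, this follows by summing all nine balance equations.

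It remains to determine $G_2$ and $G_6$, using the two balance equations not yet used, those of rows $1$ and $6$. First express everything in terms of $G_2$ and $G_6$:
\[
G_8=eG_6,\qquad G_3=eG_2+e^2G_6,\qquad G_5=(e^2+eh)G_2+e^3G_6.
\]
The row-$6$ equation $G_6=h(G_3+G_5+G_8)$ then becomes a homogeneous linear relation
\[
G_6\bigl(1-eh-e^2h-e^3h\bigr)=eh(1+e+h)\,G_2 .
\]
The left coefficient is exactly $c$, so $G_6=eh(1+e+h)G_2/c$.

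For the second equation, I would use the total-sum identity in place of the row-$1$ equation; this is legitimate because the nine equations are equivalent to eight of them plus the summed equation. Substituting all $G_i$ as multiples of $G_2$, with $G_0$ carrying the inhomogeneous term $1/(1-h)$, gives one linear equation for $G_2$. The main obstacle is the algebra of this step. One must simplify the resulting coefficient and show that it factors so that $G_2=ehc/(\ell\Delta)$, with $\Delta=1-eh(e^2+eh+h^2)$. Then $G_6=e^2h^2(1+e+h)/(\ell\Delta)$ follows from the row-$6$ relation. I expect this factorization to be a routine but error-prone polynomial identity. I would check it symbolically, and also numerically at the limiting values \eqref{eq:weights} as a sanity check.
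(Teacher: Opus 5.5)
Your proposal is correct and is essentially the paper's proof: the same nine incoming-transition equations, the same row-sum bound for uniqueness, and the same word sum for $\sum_iG_i=1/\ell$. The only difference is that the paper simply asserts that substitution verifies the closed forms, whereas you derive them by elimination, legitimately replacing the row-$1$ equation by the summed one.

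The identity you deferred does hold, and dividing by $c$ is harmless, since $eh<1/4$ and $1+e+e^2<3$ give $c>1/4$. The summed equation becomes $G_2\,\Delta/\bigl(h(1-h)c\bigr)=e/\bigl(\ell(1-h)\bigr)$, because after clearing denominators the coefficient collapses to $\Delta$ using $(1-h)(1+e+h)=1+e-eh-h^2$. This yields $G_2=ehc/(\ell\Delta)$.
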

\begin{proof}
Reading the incoming transitions in Table~\ref{tab:prefix} gives
\[
\begin{array}{lll}
G_0=1+hG_0+hG_7,&G_1=e(G_0+G_1+G_5+G_7),&G_2=h(G_1+G_6),\\
G_3=e(G_2+G_8),&G_4=hG_2,&G_5=e(G_3+G_4),\\
G_6=h(G_3+G_5+G_8),&G_7=hG_4,&G_8=eG_6.
\end{array}
\]
Substitution verifies \eqref{eq:Gclosed}. The outgoing transition matrix has row sum $e+h<1$, so the linear system has a unique solution. Also, summing the weights of all words gives $\sum_iG_i=\sum_{m\ge0}(e+h)^m=1/\ell$.
\end{proof}

To count score per original marker, locate a position by its prefix since the preceding $\LL$ guard. A specified prefix has weight $\ell\wt(P)$, including that guard. Only states 3 and 8 give positive gain, and both require the next letter to be $\HH$. Summing these gains yields
\begin{equation}\label{eq:omegaformula}
\omega=\ell h(G_3+2G_8)
=\frac{e^2h^2\bigl(1+eh(1+2e+2h)+e^2h^2\bigr)}{\Delta}.
\end{equation}
There is no further right-guard factor here: after a gain has occurred, all possible continuations together have probability one. Summing the gains along a complete segment gives its final score $O_m$, so this counts exactly the total selected interval score.

\subsection{Deletion probability of a specified position}\label{sec:firstpassage}
The nine-state table determines future scores, but it does not record membership of a previously processed position in the candidate set. We now keep this additional information. Suppose that a prefix of length $\nu$ has been read, so position $\nu$ is pending, and fix a position $j<\nu$. By \eqref{eq:selected-sets}, there are three possibilities: $j$ is accepted if $j\in\mathcal D_{\nu-1}$; it is still a candidate if $j\in\mathcal C_{\nu-1}\setminus\mathcal D_{\nu-1}$; and it has been discarded if $j\notin\mathcal C_{\nu-1}$. An accepted position stays accepted. A discarded position cannot re-enter, because later steps add only later positions.

For a position still in the candidate, use the \emph{exact} difference
\[
d=I_{\nu-1}-O_{\nu-1}<0,
\]
not its clipped value. If $d<-4$, the next step starts a new interval and discards this position, so eventual acceptance is impossible. If $d\ge-4$, the next step continues the candidate. With next letter $\sigma$, the trial difference is $d+a(b_\nu,\sigma)$. A nonnegative trial difference accepts the candidate; a negative one is the new exact difference. In particular, at $d=-4$ continuation is still required. Clipping all differences below $-4$ to $-4$ would lose this distinction for an old candidate position, although it was harmless for the score table.

Let $r,u,v,w,x,y,z$ denote the probabilities of eventual acceptance of such a candidate position when the exact difference and pending letter are respectively
\[
(-4,\E),\ (-1,\HH),\ (-2,\E),\ (-2,\HH),\ (-3,\E),\ (-3,\HH),\ (-1,\E).
\]
The future letters are independent, with probabilities $e,h,\ell$, and the first future $\LL$ ends the segment. Acceptance before this termination is a first-passage event: the trial difference must reach a nonnegative value before the position is discarded. At $(-4,\HH)$ the probability is zero, since the next score is $-1$. Reading $\LL$ also causes failure for every unaccepted position.

Conditioning on the next letter gives
\begin{equation}\label{eq:passage}
\begin{array}{llll}
r=hu,&u=ev+hw,&v=ex+h,&w=ex+hy,\\
x=er+h,&y=er,&z=ev+h.&
\end{array}
\end{equation}
For instance, at $(-4,\E)$ a next $\E$ gives difference $-5$ and failure, whereas a next $\HH$ gives $(-1,\HH)$, proving $r=hu$. At $(-2,\E)$ a next $\HH$ gives trial difference $1$ and immediate acceptance, while a next $\E$ gives $(-3,\E)$; hence $v=ex+h$. These examples also explain the constant terms $h$ in the other equations. The coefficients of the unresolved transitions have row sums at most $e+h<1$, so this probability system has a unique solution. Solving its first six equations gives
\begin{equation}\label{eq:rclosed}
r=\frac{eh^2(1+e+h)}{\Delta},\qquad u=r/h,
\end{equation}
and the remaining probabilities follow from \eqref{eq:passage}.

We will also need sums of word weights rather than probabilities. If $R\in\{\E,\HH\}^*$ is the complete future word before the right guard, its probability is $\ell\wt(R)$. Thus for any specified acceptance event $\mathcal E$,
\begin{equation}\label{eq:suffix-normalization}
\Pr(\mathcal E)=\ell\sum_{R\in\{\E,\HH\}^*}\wt(R)\ind_{\mathcal E\text{ holds for }R}.
\end{equation}
This is the reason for the factors $1/\ell$ in the suffix counts in Section~\ref{sec:paircounts}.

Finally, let $P_i$ be the probability that the \emph{pending position itself} is eventually deleted, given a nonempty prefix in state $i$. This position has not yet been processed. At the next step it is included in the candidate whether that step starts a new interval or continues the old one. Consequently the clipped state is sufficient for this first step, after which the acceptance equations above apply. They give
\begin{equation}\label{eq:pvector}
(P_0,\ldots,P_8)=(0,r,u,v,w,x,t,y,z),\qquad t=ez+hu.
\end{equation}
For state 6, the next score is $-1$, giving $(-1,\E)$ or $(-1,\HH)$ according to the next letter, which explains $t$. State 0 gives zero. We also assign zero to the empty-prefix convention, which has no pending position.

Each potentially deleted marker has a unique nonempty prefix from its preceding guard. Multiplying its prefix weight by its conditional deletion probability and summing gives
\begin{equation}\label{eq:rhoformula}
\rho=\ell\sum_{i=0}^8G_iP_i.
\end{equation}
The empty word contributes zero, and markers in $\LL$ are never deleted. Lemma~\ref{lem:segments} transfers these product-process calculations to the finite marked-forest graphs. Substituting \eqref{eq:weights} into \eqref{eq:Gclosed}--\eqref{eq:rhoformula} gives the values of $\omega$ and $\rho$ in \eqref{eq:mainconstants}, and
\[
D_{\mathrm h}=3.50060007264292758570\ldots.
\]

\section{Local triples and root-sensitive pairs}\label{sec:gadgets}
We now select additional vertices for deletion from $\Gh$. All degrees and incident-edge counts in this section refer to $\Gh$, before any of these additional deletions are made. A \emph{target forest} is a forest containing the vertices selected at this second stage. A box around consecutive trees denotes the set of vertices obtained by placing the marker at each boxed position in that same forest. By contrast, an underlined position in a word specifies one marker whose membership in the first-stage deletion set $\mathcal D(W)$ is to be tested.

Unless otherwise stated, $P,Q,R,S$ are finite words over $\{\E,\HH\}$, possibly empty. Every displayed $\LL$ guard is an actual adjacent tree of category $\LL$; a displayed word is a consecutive part of a forest, and the rest of the forest is unrestricted. Each occurrence is counted once, by marking its specified centre $C$ or $J$. Its \emph{frequency} means the number of such occurrences divided by the original number of vertices in $\BB(n,k)$, in the successive limits $n\to\infty$ and $k\to\infty$. Thus a disjoint triple family of frequency $p$ deletes a fraction $3p$ of the original vertices.

We will repeatedly use a test for the $E$ immediately before the last $\HH$ in a segment $Q\E\HH$. Define
\begin{equation}\label{eq:AB}
\mathcal B=\{2,3,4,6,8\},\quad
B=\sum_{i\in\mathcal B}G_i=\frac{eh(1+e+h)}{\ell\Delta},\quad
A=\frac1\ell-B.
\end{equation}
Here $\state(Q)$ is evaluated after reading $Q$, before reading the displayed $\E\HH$. The two corresponding transitions in Table~\ref{tab:prefix} show that the displayed $E$ is accepted when its successor $\HH$ is read precisely when $\state(Q)\in\mathcal B$. In that case the new exact difference is zero; otherwise it is $-1$. The final $\HH$ is retained. The same test applies in $Q\E\HH Y$ with $Y\in\{\E,\HH\}$ followed by the right guard: the two remaining scores are $-1$, so they cause no new acceptance. The quantities $A$ and $B$ are therefore the sums of weights of prefixes failing and passing this test, respectively.

\subsection{Triples centred at \texorpdfstring{$C$}{C}}\label{sec:triples}
\begin{proposition}\label{prop:triples}
Select the boxed positions in each of the following eligible occurrences:
\begin{enumerate}[label=(\roman*)]
\item
$\LL\,Q\,\boxed{\E\ \HH\ \CC}\,\HH R\,\LL$, provided $\state(Q)\notin\mathcal B$;
\item
$\LL\,Q\,\E\,\boxed{\HH\ Y\ \CC}\,\HH R\,\LL$, where $Y\in\{\E,\HH\}$, provided $\state(Q)\in\mathcal B$.
\end{enumerate}
All three boxed vertices are present in $\Gh$. In left-to-right order their degrees there are $(2,3,2)$. They have two internal edges, hence five incident undirected edges and score one.
\end{proposition}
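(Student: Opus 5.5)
The plan is a direct local check. For each boxed vertex I will list its neighbours in $\BB(n,k)$: left, right, split and merge. Then I decide, using Lemma~\ref{lem:selection}, Proposition~\ref{prop:horizontal} and the exact-difference bookkeeping of Section~\ref{sec:firstpassage}, which of these neighbours survive in $\Gh$. In both families the centre $C$ is the right $\LL$ guard of the segment ending just before it. The tree after $C$ is an initial $\HH$ of the next segment, so it is retained by Lemma~\ref{lem:selection}. $C$ cannot merge with a height-$k$ tree. Hence $C$ has degree $3$ in $\BB(n,k)$, and the claimed degree $2$ forces its split neighbour to be horizontally deleted. This is the only nonroutine step, and I expect it to be the main obstacle. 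The split replaces $C$ by $\E\E$ with the marker on the first $E$, so the relevant segment becomes $Q\E\HH\E\E\HH R$ in (i) and $Q\E\HH Y\E\E\HH R$ in (ii). I must show that the marked $E$ lies in $\mathcal D$ of this new word.

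For (i), $\state(Q)\notin\mathcal B$ means $\state(Q)\in\{0,1,5,7\}$, and each of these goes to row $1$ on reading $\E$. So the displayed $E$ is pending with exact difference at most $-4$, and by \eqref{eq:selected-sets} the next step starts a fresh candidate at that $E$. I will then track the exact differences, not the clipped ones. Processing the values $-1,-1,-1$ of $\HH$, $\E$, $\E$ gives differences $-1,-2,-3$, which are all at least $-4$, so the candidate continues. The final $E$ before $\HH$ contributes $+3$ and reaches $0$, and the tie convention accepts the candidate. This reproduces Example~\ref{ex:zero-score} in context and deletes the split marker.

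For (ii), $\state(Q)\in\mathcal B$ means the $E$ before the displayed $\HH$ is accepted with exact difference $0$. The subsequent values $-1,-1,-1$ (for $\HH$, $Y$ and the first $E$) give differences $-1,-2,-3$. The next $+3$ again reaches $0$ and accepts, so the split marker is deleted. In neither case can $R$ undo this, since accepted positions remain accepted.

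Next come presence and the remaining degrees. In (i), the displayed $E$ is retained by the defining property of $\mathcal B$. The final $\HH$ of the segment is retained by Lemma~\ref{lem:selection}, and $C\in\LL$ is never deleted. The $E$ has degree $2$: its left neighbour survives because a selected interval covering the last position of $Q$ would have to end at a value-$3$ position, i.e.\ at this $E$, which is retained; and it has no split, and no merge before $\HH$. The $\HH$ has degree $3$, with both horizontal neighbours present and its split neighbour retained by Proposition~\ref{prop:horizontal}. This gives $(2,3,2)$.

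In (ii), the displayed $\HH$ and $Y$ follow an acceptance at difference $0$. In the original word the remaining values are $-1,-1$, so nothing further is accepted, and both are retained; $Y$ is also the segment's last position. The $\HH$ loses its left neighbour, which is the accepted $E$, and keeps its right neighbour and its split (Proposition~\ref{prop:horizontal}), giving degree $2$. For $Y$, both horizontal neighbours are present. If $Y=\HH$, its split is retained. If $Y=\E$, its merge with $C$ is allowed and lands on a tree in $\LL$, which is retained. Either way $Y$ has degree $3$.

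Finally, the only edges among the three boxed vertices are the two consecutive horizontal ones. Vertical moves change the underlying forest, as already noted in Section~\ref{sec:horizontal}. So $c=2+3+2-2=5$, and \eqref{eq:score} gives score $7\cdot3-4\cdot5=1$.
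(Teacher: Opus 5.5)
Your proof is correct and is essentially the paper's argument: check that the three boxed vertices survive the first stage, count their neighbours in $\Gh$, and show that the split neighbour of $C$ (marker at $E_1$ in $Q\E\HH\E_1\E_2\HH R$, resp.\ $Q\E\HH Y\E_1\E_2\HH R$) is accepted by the zero-difference tie, through the differences $-1,-2,-3,0$. In (i) the bookkeeping is mislabelled. The first difference $-1$ comes from processing the displayed $E$ (net $3-4$), and this is followed by $-1,-1,+3$ for $\HH,E_1,E_2$, so $E_2$ should not also be assigned the value $-1$. Also, at exact difference $-4$ that step continues the old candidate rather than starting a fresh one, which is harmless because the difference is $-1$ either way.
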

\begin{proof}
In (i), the left segment is $Q\E\HH$. By the preceding test its displayed $E$ is not accepted, and the final $\HH$ is retained. The $C$ marker is also retained, because $C\in\LL$. Splitting this $C$ into two leaves produces the segment
\[
Q\E\HH\E_1\E_2\HH R.
\]
The split neighbour has its marker at $E_1$. Immediately after reading $Q\E\HH$, the last $\HH$ is pending and the exact difference is $-1$. Reading $\E_1,\E_2,\HH$ processes the successive scores $-1,-1,3$, and gives differences $-2,-3,0$. The candidate is accepted at the last step, including the position $E_1$. Thus the split neighbour of the target $C$ is absent from $\Gh$.

The target $C$ cannot merge with its right-hand $\HH$, and its two horizontal neighbours are retained: the left one is the terminal $\HH$ of the left segment, and the right one is the initial $\HH$ of the right segment. Its degree is therefore two. The first boxed $E$ has no vertical neighbour, since it cannot split or merge with $\HH$. Its left neighbour is retained: an interval containing that neighbour but not this retained $E$ would have to end at a position of value $-1$. Hence this $E$ has degree two. The middle $\HH$ has both horizontal neighbours and its retained split neighbour, by Proposition~\ref{prop:horizontal}, so its degree is three.

In (ii), reading the $\HH$ immediately after the unboxed $E$ accepts that $E$ and leaves exact difference zero. The next two target scores, at $\HH,Y$ before the $C$ guard, are both $-1$. Thus the two boxed positions $\HH,Y$ are not accepted. After splitting $C$, the segment becomes
\[
Q\E\HH Y\E_1\E_2\HH R.
\]
Starting at the pending $\HH$ with difference zero, the scores at $\HH,Y,\E_1,\E_2$ are $-1,-1,-1,3$. The differences are $-1,-2,-3,0$. The acceptance tie at the final step includes $E_1$, so the split neighbour of $C$ has again been deleted.

The first boxed $\HH$ has lost its left horizontal neighbour, and retains its right horizontal neighbour and its split neighbour; its degree is two. The middle $Y$ retains both horizontal neighbours. Its third edge is its split edge when $Y\in\HH$, and its merge edge with $C$ when $Y=E$; in the latter case the merged tree $(E,C)$ has height two and is in $\LL$ because $k\ge3$. Finally, $C$ retains exactly its two horizontal edges, as in (i). The degree sum is seven and the two internal horizontal edges are counted twice in that sum, giving five incident edges.
\end{proof}

\begin{figure}[htbp]
\centering
\begin{tikzpicture}[x=0.88cm,y=0.84cm,>=Stealth,
 every node/.style={font=\small,inner sep=2.5pt}]
\foreach \x/\lab in {0/\LL,1/Q,2/\E,3/\HH,4/\E_1,5/\E_2,6/\HH,7/R,8/\LL}
  \node (s\x) at (\x,0) {$\lab$};
\node[draw,dashed,fit=(s4),inner sep=4pt] {};
\draw[->] (4.5,-.45)--(4.5,-1.35) node[midway,right] {merge};
\foreach \x/\lab in {0/\LL,1/Q,2/\E,3/\HH,4.5/\CC,6/\HH,7/R,8/\LL}
  \node at (\x,-1.8) {$\lab$};
\draw (1.68,-2.13) rectangle (4.85,-1.47);
\node[anchor=west] at (8.55,0) {deleted marker};
\node[anchor=west] at (8.55,-1.8) {three targets};
\end{tikzpicture}
\caption{An eligible $EHC$ triple. The dashed box marks the deleted split neighbour of the target $C$. The solid box denotes three different vertices, with the marker at $E$, at $H$, and at $C$ in the same target forest. The vertical arrow is the merge taking the dashed marker to the $C$ marker; it is not incident to all three targets.}\label{fig:triple}
\end{figure}

\Needspace{19\baselineskip}
\begin{example}[The split neighbour in an actual forest]\label{ex:triple}
Use the height-three tree $H$ from Example~\ref{ex:basic-deletion}, and take the target forest
\[
\mathcal F=(C,E,H,C,H,C).
\]
This is case (i) with $Q=R=\varepsilon$. Its two segments $\E\HH$ and $\HH$ have no deleted positions. The additional triple consists of the vertices marked at positions 2, 3 and 4.

The split neighbour of $v(\mathcal F,4)$ is
\[
v((C,E,H,E,E,H,C),4).
\]
Its segment word is $\E\HH\E_1\E_2\HH$. Example~\ref{ex:zero-score} gives $\mathcal D(\E\HH\E\E\HH)=\{1,2,3,4\}$, so the marker at $E_1$ (position 4 in the forest, position 3 in the segment) has indeed been deleted. The three target degrees in $\Gh$ are consequently $(2,3,2)$, and deleting the triple loses exactly five incident undirected edges when considered on its own. This example is why a zero-score acceptance in the first stage can be useful in the second stage.
\end{example}

For the frequencies, the two outer guards contribute $\ell^2$, each $C$ contributes $e^2$, and summing the unrestricted right word $R$ contributes
$\sum_R\wt(R)=1/\ell$. The allowed left words contribute $A$ in (i) and $B$ in (ii). Multiplying by the other displayed letter weights gives
\begin{equation}\label{eq:triplefreqs}
p_{EHC}=\ell e^3h^2A,\qquad
p_{HHC}=\ell e^3h^3B,\qquad
p_{HEC}=\ell e^4h^2B.
\end{equation}
For example, the first product is $\ell^2\cdot A\cdot(eh)e^2h\cdot(1/\ell)$. Here the names $HHC$ and $HEC$ distinguish $Y=\HH$ and $Y=\E$ in (ii). Summing yields
\begin{equation}\label{eq:q3closed}
q_3=p_{EHC}+p_{HHC}+p_{HEC}
=\frac{e^3h^2(1-eh+e^2h^2)}{\Delta}.
\end{equation}

\subsection{Pairs centred at a tree of height \texorpdfstring{$k-1$}{k-1}}\label{sec:pairs}
Let $J$ be a tree of exact height $k-1$, and put
\[
H^-=(J,E),\qquad H^+=(E,J).
\]
These are different ordered trees of height $k$. Although both have category $\HH$, their split neighbours are different. Consider a target forest containing
\begin{equation}\label{eq:pairtarget}
\LL\,P\,J\,E\,S\,\LL,
\end{equation}
where $P$ is nonempty. Let $u$ be the vertex marked at the last tree of $P$, and $v$ the vertex marked at $J$ in this target forest.

Suppose first that $P=Q\E$. Select the pair $\{u,v\}$ when all three of the following first-stage deletion tests hold:
\begin{equation}\label{eq:EJconditions}
\begin{array}{lll}
\text{(a)}&\text{the marked position in }P\underline{\HH}S
    &\text{is deleted},\\
\text{(b)}&\text{the marked position in }\underline{\E}S
    &\text{is deleted},\\
\text{(c)}&\text{the marked position in }Q\underline{\HH}\E S
    &\text{is deleted}.
\end{array}
\end{equation}
Each test means membership of the underlined position in $\mathcal D$ of the complete displayed word, with its two $\LL$ guards. In (a), the $\HH$ represents the root $H^-$ obtained by merging $J$ with its right-hand $E$; this is the merge neighbour of $v$. In (b), the word begins just after the $J\in\LL$ guard in the target forest; its first marker is the right horizontal neighbour of $v$. In (c), the $\HH$ represents $H^+$ obtained by merging the final $E$ of $P$ with $J$; this is the merge neighbour of $u$. In particular, these are three executions of the same rule on specified, generally different words, not three independent events.

For the second family, suppose $P=Q\E\HH$. Select $\{u,v\}$ when $\state(Q)\in\mathcal B$ and tests (a),(b) hold. The state condition says exactly that the penultimate $E$ of $P$ is deleted in the target's left segment. The two families are called the $(E,J)$ and $(H,J)$ pairs, according to the last tree of $P$.

\begin{proposition}\label{prop:pairs}
Every selected pair is present in $\Gh$, is incident to at most three undirected edges there, and has score at least two.
\end{proposition}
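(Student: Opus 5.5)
The plan is to verify the three claims directly from the local structure of the target forest \eqref{eq:pairtarget}. The score then follows from \eqref{eq:score}: a pair with $d=2$ and $c\le3$ has $W=14-4c\ge2$. Throughout, "incident edges" means edges of $\Gh$, so any neighbour already deleted in the first stage simply does not count. An upper bound of three therefore suffices, even if some listed neighbours turn out to be absent for other reasons.

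\emph{Presence.} The vertex $v$ has its marker at $J\in\LL$, and $\LL$ positions are never deleted. The vertex $u$ has its marker at the last tree of $P$, which is the final position of the segment $P$ before the $J$ guard. Lemma~\ref{lem:selection} says this final position is retained.

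\emph{Neighbours of $v$.} The left horizontal neighbour of $v$ is $u$, so the edge $uv$ is internal. Its right horizontal neighbour is the first marker of the segment $\E S$, which is deleted by test (b). Its merge neighbour is the marker at $H^-=(J,E)$, which is deleted by test (a). The remaining edge is the split neighbour, which I would keep as one possibly present edge. (One can check that it is in fact retained: if the left child of $J$ is in $\LL$ it is retained; if it is $E$, then the right child has height $k-2\ge1$ because $k\ge3$, so that $E$ is the last position of its segment.) In any case, $v$ contributes at most the edge $uv$ and its split edge.

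\emph{Neighbours of $u$.} I would split into the two families.
\begin{itemize}
\item For an $(E,J)$ pair, $u$ is at an $E$, so it has no split neighbour. Its merge neighbour is the marker at $H^+=(E,J)$, which is deleted by test (c). Its right neighbour is $v$. Only its left horizontal neighbour remains, and this exists because $P$ is nonempty. The incident edges are therefore at most: the left edge of $u$, the edge $uv$, and the split edge of $v$.
\item For an $(H,J)$ pair, $u$ is at a height-$k$ tree, so it has no merge neighbour. Its left neighbour is the penultimate $E$ of $P$, which is deleted exactly when $\state(Q)\in\mathcal B$. This is the test described after \eqref{eq:AB}. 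What remains is its split edge, retained by Proposition~\ref{prop:horizontal}, together with $uv$ and the split edge of $v$.
\end{itemize}
In both families $c\le3$, giving score at least two.

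The main obstacle is correctly identifying each Cayley neighbour with the underlined position in the corresponding test word. In particular, the merge of $u$ (in the $E$ case) and the merge of $v$ rewrite the category word, and the tests (a) and (c) are stated on those rewritten words. I would check carefully that the merged trees really are $H^+$ and $H^-$ in category $\HH$ (both have height $k$), and that the marker lands at the underlined position. The rest is routine bookkeeping.
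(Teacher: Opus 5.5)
Your proof is correct and follows essentially the same route as the paper's. Both show that $u$ and $v$ survive because $v$ sits at an $\LL$ tree and $u$ is the final segment position. Tests (b) and (a) then remove $v$'s right and merge neighbours, and either test (c) or the state condition removes one further neighbour of $u$, leaving at most three incident edges and score at least $14-12=2$. Your parenthetical check that $v$'s split neighbour is actually retained is valid but unnecessary for the upper bound; the paper omits it.
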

\begin{proof}
The vertex $v$ has its marker at $J\in\LL$, so it survives the first stage. The vertex $u$ is the final position before that $\LL$ guard, and also survives. Their common horizontal edge is therefore present.

At $v$, test (b) removes the right horizontal neighbour, and test (a) removes the merge neighbour. Besides the edge to $u$, only its split edge can remain. For an $(E,J)$ pair, $u$ has no split edge, and test (c) removes its merge neighbour, leaving at most its two horizontal edges. For an $(H,J)$ pair, the left horizontal neighbour of $u$ is removed by the state condition; $u$ cannot merge within the height bound, and its split neighbour is retained by Proposition~\ref{prop:horizontal}. The degree sum of the pair is thus at most four. Subtracting its one internal edge gives at most three incident edges and score at least $7\cdot2-4\cdot3=2$.
\end{proof}

\begin{figure}[htbp]
\centering
\begin{tikzpicture}[x=0.95cm,y=0.78cm,>=Stealth,
 every node/.style={font=\small,inner sep=3pt}]
\node at (0,0) {$\LL Q$}; \node at (1.8,0) {$\E$};
\node[draw,dashed] at (4.1,0) {$H^-=(J,E)$};
\node at (6.2,0) {$S\LL$};
\draw[->] (4.1,-.42)--(4.1,-1.12) node[midway,right] {split};
\node at (0,-1.6) {$\LL Q$};
\node[draw] at (1.8,-1.6) {$\E$};
\node[draw] at (3.5,-1.6) {$J$};
\node[draw,dashed] at (4.9,-1.6) {$\E$};
\node at (6.2,-1.6) {$S\LL$};
\draw[->] (2.65,-2.02)--(2.65,-2.72) node[midway,left] {merge};
\node at (0,-3.2) {$\LL Q$};
\node[draw,dashed] at (2.65,-3.2) {$H^+=(E,J)$};
\node at (4.9,-3.2) {$\E$};
\node at (6.2,-3.2) {$S\LL$};
\end{tikzpicture}
\caption{The $(E,J)$ pair with $P=Q\E$. The solid boxes are the two retained target vertices. The dashed markers, from top to bottom, are the neighbours tested in (a), (b) and (c) of \eqref{eq:EJconditions}. Each row describes a different forest except that both target markers belong to the middle row.}\label{fig:pair}
\end{figure}

\Needspace{23\baselineskip}
\begin{example}[All three deletion tests for a pair]\label{ex:pair}
Take $Q=\E$, $P=\E\E$ and $S=\HH\E\HH$. With $k=3$, $J=(C,C)$ and $H=(J,J)$, an actual target forest is
\[
\mathcal F=(C,E,E,J,E,H,E,H,C).
\]
The target pair is $\{v(\mathcal F,3),v(\mathcal F,4)\}$. The three complete test words and their deletion sets are
\begin{center}
\begin{tabular}{clcc}
\toprule
Test&Word (underlined position tested)&Tested index&$\mathcal D(W)$\\
\midrule
(a)&$\E\E\underline{\HH}\HH\E\HH$&3&$\{2,3,4,5\}$\\
(b)&$\underline{\E}\HH\E\HH$&1&$\{1,2,3\}$\\
(c)&$\E\underline{\HH}\E\HH\E\HH$&2&$\{1,2,3,4,5\}$\\
\bottomrule
\end{tabular}
\end{center}
In (a) the underlined $\HH$ is $H^-=(J,E)$, and in (c) it is $H^+=(E,J)$; the other height-three trees stay unchanged. Every tested index belongs to its displayed deletion set. In the target itself, the left segment $\E\E$ is retained, while the right segment $\E\HH\E\HH$ loses its first three positions. Thus $v$ has lost its right and merge neighbours, and $u$ has lost its merge neighbour. For this choice of $J$, the split neighbour of $v$ is marked at $C$ and is retained. The two target degrees are exactly $(2,2)$, with one common edge, so this pair has three incident edges and score two.
\end{example}

\subsection{Common-suffix comparison}\label{sec:common-suffix}
The words in \eqref{eq:EJconditions} differ before $S$, but use the same $S$. To count simultaneous deletions, we need to compare their executions at the same point in this common suffix. The following statement makes the comparison precise. A candidate position and its exact difference have the meaning given in Section~\ref{sec:firstpassage}.

\begin{lemma}[Common-suffix comparison]\label{lem:common-suffix}
Consider finitely many executions of \eqref{eq:schedule}--\eqref{eq:selected-sets}. In each execution a specified position has already been processed, is still in the candidate, is not accepted, and has exact difference $d_j\ge-4$. Suppose that all executions have the same pending letter and are then continued with the same finite suffix, followed by the right $\LL$ guard. All specified positions are eventually accepted if and only if the specified position in an execution with minimum $d_j$ is eventually accepted. Conditions whose positions have already been accepted may be omitted; if any required position has been discarded, joint acceptance is impossible.
\end{lemma}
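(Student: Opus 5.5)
The plan is to show that, while a specified position is still a candidate, its fate under the common suffix depends only on the exact difference $d$ and the pending letter, and that this dependence is monotone in $d$. Section~\ref{sec:firstpassage} shows that a candidate position with exact difference $d\ge-4$ evolves deterministically. With pending letter $b$ and next letter $\sigma$, the trial difference is $x=d+a(b,\sigma)$. If $x\ge0$, the candidate (and hence the position) is accepted. If $x<0$, the new exact difference is $x$ and the position remains a candidate. If a current difference is below $-4$, the next step discards the position. Reading $\LL$ ends the segment with failure.

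Crucially, the increments $a(b,\sigma)$ depend only on the letters read, not on $d$. All executions share the same pending letter and the same suffix, so at every step their trial differences differ by the constant offsets $d_j-d_{j'}$. I will therefore define, for the common letter sequence, the partial sums $s_t=\sum_{\tau\le t}a_\tau$. The specified position in execution $j$ is accepted iff there is a first $t$ with $d_j+s_t\ge0$ such that $d_j+s_\tau\ge-4$ holds for all earlier $\tau<t$ at which continuation is needed, and $t$ occurs before the guard. Equivalently, define $T(d)$ as the event that the walk $d+s_t$ reaches $[0,\infty)$ before dropping below $-4$ (checked at the discard test) and before the suffix ends.

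The key step is monotonicity: if $d\le d'$ and $T(d)$ holds, then $T(d')$ holds. The walk $d'+s_t$ dominates $d+s_t$ pointwise. Hence at or before the hitting time of the $d$-walk, the $d'$-walk has reached $[0,\infty)$, and it has never gone below $-4$ earlier, since the $d$-walk did not. Once accepted, a position remains accepted by Lemma~\ref{lem:selection}. Applying this with $d$ the minimum $d_j$ gives one direction of the equivalence; the other is trivial.

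The closing remarks of the statement are immediate. Accepted positions stay in $\mathcal D$ by the inclusion $\mathcal D_{i-1}\subseteq\mathcal D_i$. Discarded positions never re-enter the candidate, since later steps add only later positions. The main obstacle is stating the discard test at the boundary $d=-4$ precisely: continuation is required at $-4$ and discarding occurs only at $<-4$. I must also verify that the threshold comparison in \eqref{eq:selected-sets}, $I_{i-1}\ge O_{i-1}-4$, is exactly $d\ge-4$ and so is monotone in $d$. Clipping must not be used here.
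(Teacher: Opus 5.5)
Your proposal is correct and follows the paper's own route. The common pending letter and common suffix give every still-unaccepted execution the same increments, so the larger exact difference reaches a nonnegative trial difference no later and never falls below $-4$ first. Your partial-sum description of the first-passage event just writes out the order-preservation induction that the paper states in words.
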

\begin{proof}
Until a specified position is accepted, its next score is determined by the common pending letter and the next common letter. Thus every as-yet-unaccepted execution receives the same increment. If the minimum difference is at least $-4$, each such execution continues its candidate by the first tie convention in \eqref{eq:selected-sets}. The order of their exact differences is preserved until one accepts. Any execution starting with a larger difference reaches a nonnegative trial difference no later than one starting with the minimum. Once it accepts, its specified position remains accepted and its condition can be removed.

It follows inductively that if the minimum-difference position accepts, all required positions have accepted by that time. Conversely, joint acceptance includes acceptance of that position. If its difference falls below $-4$ before acceptance, it cannot recover as an old candidate position; if the guard arrives first, it is not accepted. These failures are consistent with the same equivalence. No independence assumption is used.
\end{proof}

\subsection{Closed enumeration of the pair contexts}\label{sec:paircounts}
Let $w$ be the probability associated with $(-2,\HH)$ in \eqref{eq:passage}, and put $M=hw$. Define $T$ as the sum of $\wt(P)\wt(S)$ over all pairs of words $P,S$ satisfying the $(E,J)$ conditions, and define $T_c$ similarly for the $(H,J)$ conditions. These sums do not include the outer guards or the central $J,E$ in the target forest.

\begin{lemma}\label{lem:pairreduction}
The two context sums are
\begin{align}\label{eq:Tclosed}
T&=\frac{e(G_1+G_5)M+(G_3+G_5+G_8)r}{\ell},\\
T_c&=\frac{ehBM}{\ell}.\label{eq:Tcclosed}
\end{align}
Consequently the occurrence frequencies are
\begin{equation}\label{eq:q2closed}
p_{EJ}=\ell^2ejT,\qquad p_{HJ}=\ell^2ejT_c,\qquad q_2=\ell^2ej(T+T_c).
\end{equation}
\end{lemma}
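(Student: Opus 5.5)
The plan is to compute both context sums by fixing the left word $Q$ through its state $i=\state(Q)$ in Table~\ref{tab:prefix}. Each test is then run up to the point where all the relevant executions have the \emph{same} pending letter, and Lemma~\ref{lem:common-suffix} turns the joint test into a single first-passage probability from \eqref{eq:passage}. Summing over $S$ with weights $\wt(S)$ produces a probability divided by $\ell$, by \eqref{eq:suffix-normalization}. Summing over $Q$ in state $i$ produces $G_i$. This is why both formulas carry an overall factor $1/\ell$ and are linear in the $G_i$.

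\emph{The $(E,J)$ family.} Here $P=Q\E$. The three test words are $Q\E\underline{\HH}S$ for (a), $\underline{\E}S$ for (b) and $Q\underline{\HH}\E S$ for (c).

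Test (b) is the simplest. The tested $\E$ is the first letter, so it is pending in state $1=(-4,\E)$. If $S$ begins with $\E$, its score $-1$ gives difference $-5$ and the position is discarded. Hence $S=\HH S'$ is forced. After reading that $\HH$, the tested position is a candidate with exact difference $-1$ and pending $\HH$, which is the configuration whose acceptance probability is $u$; overall $P_1=r=hu$.

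For (a) and (c), I will trace, for each state $i$, the two prefixes $Q\E\HH$ and $Q\HH\E$ and then read the first letter of $S$. The tested $\HH$ in (a) is processed when $s_1$ is read, with score $-1$. The tested $\HH$ in (c) is processed when the $\E$ is read, again with score $-1$. In each case this gives either immediate acceptance, immediate discarding, or a candidate with a definite exact difference.

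Once the first letter of $S$ (or its first two letters) has been read, all surviving executions have the same pending letter and the same remaining suffix. Lemma~\ref{lem:common-suffix} then says that only the execution with the minimum difference matters. I expect the cases to collapse into two classes.
\begin{itemize}
\item In states $3,5,8$, tests (a) and (c) are either already accepted or dominated by (b). The joint event is then exactly (b), and contributes $(G_3+G_5+G_8)\,r$.
\item In states $1,5$ (with an extra letter weight $e$ coming from the required continuation), the binding execution sits at $(-2,\HH)$ after one further forced $\HH$. This contributes $e(G_1+G_5)\,hw=e(G_1+G_5)M$.
\end{itemize}
All other states must be shown to make some test fail, through a discard below $-4$ or a difference that can never catch up before the guard. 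Every forced letter of $S$ contributes its weight, and the free remainder contributes a first-passage probability divided by $\ell$.

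\emph{The $(H,J)$ family.} The condition $\state(Q)\in\mathcal B$ contributes $B$ and fixes the exact difference $0$ after the displayed $\E\HH$, as explained after \eqref{eq:AB}. Test (a) uses the word $Q\E\HH\underline{\HH}S$. Test (b) is the same as before, and so forces $s_1=\HH$ and contributes $r=hu$.

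I expect that, after aligning the pending letters, test (a) dominates (b) or coincides with it. The result should be the single factor $ehM$: the letter weights $e$ and $h$ come from the last two letters of $P$, and $M=hw$ from the forced $\HH$ followed by acceptance from $(-2,\HH)$. This gives $T_c=ehBM/\ell$.

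\emph{Frequencies.} For \eqref{eq:q2closed}, each occurrence contributes the two outer guards $\ell^2$ and the central $E$ with weight $e$. The tree $J$, summed over all trees of exact height $k-1$, contributes $j$ by \eqref{eq:j}. Lemma~\ref{lem:segments} transfers these product-process weights to the limits taken in $\BB(n,k)$.

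The main obstacle is the case analysis over the nine states of $Q$. For each state I must identify correctly which of the three executions has the minimum exact difference once the pending letters coincide. I must also use exact differences, not clipped ones, for the old candidate positions, as Section~\ref{sec:firstpassage} warns. I would tabulate this case by case, check it against Example~\ref{ex:pair} ($Q=\E$, which is state $1$), and finally confirm the closed forms by substituting \eqref{eq:Gclosed} and \eqref{eq:rclosed}.
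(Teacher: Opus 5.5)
Your framework is the paper's: index by $\state(Q)$, read the first letter $\HH$ of $S=\HH R$ so that all three executions have the same pending letter (one letter always suffices), apply Lemma~\ref{lem:common-suffix}, and convert probabilities to suffix weights via \eqref{eq:suffix-normalization}. Your $(H,J)$ computation and the factor $\ell^2ej$ are also correct.

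The gap is the $(E,J)$ case split. You read it off from the shape of \eqref{eq:Tclosed} rather than deriving it. As written it is inconsistent, since state $5$ appears in both of your classes. Under your declared indexing by $\state(Q)$ it is also wrong.

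Every row carries the factor $e$ of the last letter of $P=Q\E$. That factor does not come from a ``required continuation'': the only forced suffix letter is the initial $\HH$. Tracing $Q\E\underline{\HH}\HH$, $\underline{\E}\HH$ and $Q\underline{\HH}\E\HH$ gives the following.
\begin{itemize}
\item Test (b) is always at $-1$.
\item Test (a) is at $-2$ exactly when $\state(Q)\in\{0,1,5,7\}$, since then $\state(Q\E)=1$, and at $-1$ otherwise.
\item Test (c) is discarded, with difference $-5$, when $\state(Q)\in\{0,7\}$. It is still a candidate at $-1$ when $\state(Q)=4$, only because of the continuation tie at exact difference $-4$ (Example~\ref{ex:continuation-tie}). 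It is already accepted in the remaining rows.
\end{itemize}
So the $(-2,\HH)$ class is $\state(Q)\in\{1,5\}$, the $(-1,\HH)$ class is $\state(Q)\in\{2,3,4,6,8\}$, and only rows $0$ and $7$ fail. Your plan to show that ``all other states'' make some test fail would therefore be trying to prove something false for states $2$, $4$ and $6$.

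The correct tally is $\ell T=e(G_1+G_5)M+e(G_2+G_3+G_4+G_6+G_8)r$. The step you are missing reduces the second coefficient to $G_3+G_5+G_8$, using the balance equations $G_3=e(G_2+G_8)$, $G_5=e(G_3+G_4)$ and $G_8=eG_6$ from the proof of Lemma~\ref{lem:G}.

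Equivalently, $G_3+G_5+G_8$ is the total weight of the words $P$ with $\state(P)\in\{3,5,8\}$. That is presumably what your first bullet was reaching for. However, you cannot index the whole computation by $\state(P)$, because test (c) depends on $Q$. For example, $\state(P)=1$ arises from $\state(Q)\in\{0,1,5,7\}$, and of these only $1$ and $5$ succeed.

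With this corrected table the rest of your argument goes through unchanged.
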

\begin{proof}
First, condition (b) forces $S=\HH R$. If $S$ is empty, the initial $E$ in $\E S$ has terminal score $-1$ and is not accepted. If $S$ starts with $\E$, that position has candidate score $-5$ after processing and is discarded at the next step. With $S=\HH R$, reading the initial $\E\HH$ in test (b) leaves the tested $E$ in the candidate with exact difference $-1$ and pending letter $\HH$.

For an $(E,J)$ pair write $P=Q\E$. Run the recurrence separately on
\[
\underbrace{Q\E\underline{\HH}\HH}_{\text{test (a)}},\qquad
\underbrace{\underline{\E}\HH}_{\text{test (b)}},\qquad
\underbrace{Q\underline{\HH}\E\HH}_{\text{test (c)}}.
\]
The last $\HH$ in each displayed prefix is the \emph{first letter of $S$}. It has been read but not processed. All three underlined positions have now been processed. The unread letters are exactly the same word $R$ in all three executions, so Lemma~\ref{lem:common-suffix} applies at this point.

Table~\ref{tab:pair-states} gives the statuses at this precise stage for every possible $\state(Q)$. A numerical entry is the exact difference of the still-unaccepted candidate containing the tested position. The symbol $\mathsf A$ means that the tested position is already accepted, and $\mathsf X$ means that it cannot be accepted later. The column for (b) is included to show all three tests explicitly.
\begin{table}[htbp]
\centering
\caption{Pair tests after the first letter $\HH$ of $S=\HH R$ has been read. That letter is pending in every execution. The final column sums $\wt(S)=h\wt(R)$ over successful suffixes, without the right guard.}\label{tab:pair-states}
\begin{tabular}{cccccc}
\toprule
$\state(Q)$&$\state(Q\E)$&(a)&(b)&(c)&Suffix weight\\
\midrule
0&1&$-2$&$-1$&$\mathsf X$&$0$\\
1&1&$-2$&$-1$&$\mathsf A$&$M/\ell$\\
2&3&$-1$&$-1$&$\mathsf A$&$r/\ell$\\
3&5&$-1$&$-1$&$\mathsf A$&$r/\ell$\\
4&5&$-1$&$-1$&$-1$&$r/\ell$\\
5&1&$-2$&$-1$&$\mathsf A$&$M/\ell$\\
6&8&$-1$&$-1$&$\mathsf A$&$r/\ell$\\
7&1&$-2$&$-1$&$\mathsf X$&$0$\\
8&3&$-1$&$-1$&$\mathsf A$&$r/\ell$\\
\bottomrule
\end{tabular}
\end{table}

Here is a direct way to reproduce every row. Begin with the clipped state of $Q$. For (a), append $\E,\HH,\HH$ in that order, distinguishing the second of these letters. For (c), append $\HH,\E,\HH$, distinguishing the first. The numerical transitions follow \eqref{eq:state-update}. Once the distinguished letter is processed, also record whether it was accepted, remains in the candidate, or is discarded, using \eqref{eq:selected-sets} with the exact difference. For (b), the initial computation is always $\underline{\E}\HH$ and gives $-1$. This procedure uses only the displayed recurrences; representative prefixes for rows 0 through 8 are, respectively,
\[
\varepsilon,\ \E,\ \E\HH,\ \E\HH\E,\ \E\HH\HH,
\ \E\HH\E\E,\ \E\HH\E\HH,\ \E\HH\HH\HH,\ \E\HH\E\HH\E.
\]
The entries depend on the state, not on the choice of representative, because each tested position lies after $Q$ and is first included by the same state transition.

By the common-suffix comparison, rows 1 and 5 are governed by $(-2,\HH)$, with acceptance probability $w$. Including the first suffix letter and converting probability to a word-weight sum using \eqref{eq:suffix-normalization} gives $hw/\ell=M/\ell$. Rows 2, 3, 4, 6 and 8 are governed by $(-1,\HH)$, giving $hu/\ell=r/\ell$. Rows 0 and 7 contribute zero because test (c) has failed. Since $\wt(P)=e\wt(Q)$, the total is
\[
T=\frac{e(G_1+G_5)M+e(G_2+G_3+G_4+G_6+G_8)r}{\ell}.
\]
The identities $G_3=e(G_2+G_8)$, $G_5=e(G_3+G_4)$ and $G_8=eG_6$ reduce this to \eqref{eq:Tclosed}.

For an $(H,J)$ pair, $P=Q\E\HH$ with $\state(Q)\in\mathcal B$. Reading $P$ leaves state 6, and its penultimate $E$ has already been accepted. In test (a), after the additional root $\HH$ and the first suffix $\HH$ have been read, the tested root position has exact difference $-2$ and the suffix letter is pending. Test (b) again has difference $-1$ at the same pending letter. The successful suffix weight is therefore $M/\ell$. The allowed words $P$ have total weight $ehB$, proving \eqref{eq:Tcclosed}. Finally, the two outer guards and the central $J,E$ in \eqref{eq:pairtarget} contribute $\ell^2je$, proving \eqref{eq:q2closed}.
\end{proof}

\begin{example}[The continuation tie in row 4]\label{ex:continuation-tie}
Take $Q=\E\HH\HH$, which has state 4. At the comparison stage in the proof, the three prefixes are
\[
\E\HH\HH\E\underline{\HH}\HH,\qquad
\underline{\E}\HH,\qquad
\E\HH\HH\underline{\HH}\E\HH.
\]
For test (c), processing the underlined $\HH$ gives exact difference $-4$, with the following $\E$ pending. When the last displayed $\HH$ is read, the score of that $\E$ is $3$. Continuing the existing candidate and starting a new interval give equal scores, but only continuation keeps the underlined position. The first tie convention in \eqref{eq:selected-sets} therefore leaves that position in the candidate at difference $-1$. All three tests now have difference $-1$ and the same pending $\HH$.

For example, choose $R=\E\HH$, so $S=\HH\E\HH$. The remaining scores before the guard are $-1,3,-1$. Each tested position is accepted after the $3$, and the final $-1$ does not undo acceptance. This illustrates why the continuation tie at difference $-4$ is distinct from the acceptance tie at score zero in Example~\ref{ex:zero-score}.
\end{example}

\subsection{A separated triple}\label{sec:separated}
There is one further family, with target pattern
\begin{equation}\label{eq:isolated}
\LL\,\boxed{\E\ \HH\ \CC}\,\E\HH\E\HH\,\LL.
\end{equation}
Here there are no unspecified words between the displayed positions: the left segment is exactly $\E\HH$, and the right segment is exactly $\E\HH\E\HH$.

\begin{lemma}\label{lem:isolated}
The three boxed vertices in \eqref{eq:isolated} are present in $\Gh$, have degrees $(2,3,2)$, and are incident to five undirected edges. Their score is one, and their occurrence frequency is
\begin{equation}\label{eq:p0}
p_0=\ell^2e^5h^3.
\end{equation}
\end{lemma}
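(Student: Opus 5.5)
The plan is to verify the lemma by direct execution of the first-stage rule on the three segments involved, in the same spirit as Proposition~\ref{prop:triples}. Here nothing is unspecified, so every test is a finite calculation with \eqref{eq:schedule}--\eqref{eq:selected-sets}.

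First I determine which boxed vertices survive in $\Gh$. The left segment is exactly $\E\HH$, with values $(3,-1)$. This gives $I_1=-1$, $I_2=-2$ and $O_1=O_2=0$, so nothing is accepted and both boxed positions $\E,\HH$ are retained. The marker at $C\in\LL$ is never deleted, so all three boxed vertices lie in $\Gh$. The right segment is exactly $\E\HH\E\HH$. By Example~\ref{ex:basic-deletion} its deletion set is $\{1,2,3\}$, so the right horizontal neighbour of the target $C$ is absent.

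Next I compute the degrees.
\begin{itemize}
\item \emph{The boxed $E$.} Its left neighbour is the $\LL$ guard, which is retained. It cannot merge with the following $\HH$ and has no split. Its right neighbour is the retained $\HH$. So its degree is two.
\item \emph{The boxed $H$.} It has both retained horizontal neighbours, and its split neighbour is retained by Proposition~\ref{prop:horizontal}. So its degree is three.
\item \emph{The target $C$.} Its left neighbour $\HH$ is retained and its right neighbour has just been shown deleted. Its merge with the following $E$ gives $(C,E)$, which has height two and so lies in $\LL$ for $k\ge3$; this merge neighbour is therefore retained.
\end{itemize}
The remaining point, and the main obstacle, is the split neighbour of $C$. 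Splitting $C$ produces the single segment
\[
\E\HH\E_1\E_2\E\HH\E\HH,
\]
with values $(3,-1,-1,-1,3,-1,3,-1)$, and I must check that $E_1$ is accepted. Running the recurrence gives candidate differences $-1,-2,-3,-4$ through position $4$. At position $5$ one has $I_4=O_4-4$, so the continuation tie (as in Example~\ref{ex:continuation-tie}) keeps the old candidate, including $E_1$. The differences then become $-1$ and $-2$, and at position $7$ the difference reaches $1$. The whole interval $[1,7]$ is accepted, so $E_1$ is deleted. Hence $C$ has degree exactly two.

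The degrees are therefore $(2,3,2)$, with two internal horizontal edges. This gives $7-2=5$ incident edges and score $7\cdot3-4\cdot5=1$.

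Finally, for the frequency I count each occurrence once by its central $C$, as in \eqref{eq:triplefreqs}. The two guards contribute $\ell^2$, the left segment $\E\HH$ contributes $eh$, the tree $C$ contributes $e^2$, and the right segment $\E\HH\E\HH$ contributes $e^2h^2$. No unrestricted word appears, so there is no factor $1/\ell$. This yields $p_0=\ell^2e^5h^3$, with the passage to $\BB(n,k)$ justified by Lemma~\ref{lem:segments} for this finite root pattern.
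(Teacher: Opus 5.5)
Your proof is correct and follows the paper's argument essentially step for step. You retain the left segment, use Example~\ref{ex:basic-deletion} for the right segment, run the recurrence explicitly on $\E\HH\E_1\E_2\E\HH\E\HH$ (continuation tie at position $5$, acceptance of $[1,7]$ at position $7$), derive the degrees $(2,3,2)$ from neighbour counts, and obtain the frequency $\ell^2(eh)e^2(e^2h^2)$; the only difference is that you spell out the degree checks for the boxed $E$ and $H$ directly, where the paper refers back to Proposition~\ref{prop:triples}.
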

\begin{proof}
The left segment is retained. By Example~\ref{ex:basic-deletion}, the right segment loses its first three positions. Splitting $C$ produces the complete segment
\[
\E\HH\E_1\E_2\E\HH\E\HH.
\]
The split marker is at $E_1$, position 3 in this word. For completeness, its recurrence values are
\[
\begin{array}{c|rrrrrrrr}
i&1&2&3&4&5&6&7&8\\\hline
a_i&3&-1&-1&-1&3&-1&3&-1\\
I_i&-1&-2&-3&-4&-1&-2&1&0\\
O_i&0&0&0&0&0&0&1&1
\end{array}
\]
The continuation tie at position 5 preserves the interval beginning at position 1; position 7 accepts it. Thus $\mathcal D(W)=\{1,\ldots,7\}$ and the split neighbour is deleted.

The target $C$ has lost its split neighbour and its right horizontal neighbour. It retains its left horizontal neighbour and its merge neighbour marked at $(C,E)\in\LL$, of height two, so it has degree two. The first two target degrees are two and three by the same neighbour counts as in Proposition~\ref{prop:triples}. The two internal edges then give five incident edges. Multiplication of the two guard weights and the seven interior tree weights gives $\ell^2(eh)e^2(eheh)=\ell^2e^5h^3$.
\end{proof}

\subsection{Disjointness of the second-stage deletion sets}\label{sec:disjointness}
\begin{lemma}\label{lem:disjoint}
The target sets in Propositions~\ref{prop:triples}, \ref{prop:pairs} and Lemma~\ref{lem:isolated} are pairwise disjoint, and are disjoint from the first-stage deletion set.
\end{lemma}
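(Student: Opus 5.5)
The plan is to identify each target vertex by its marker and by where that marker sits in the segment structure of the target forest, and then show that these local descriptions cannot coincide.

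\emph{Disjointness from the first stage.} This is immediate from the propositions.
\begin{itemize}
\item Every target set contains a vertex marked at $C$ or $J$. Both are in $\LL$, so that vertex is never deleted.
\item The remaining target vertices lie in segments. In Propositions~\ref{prop:triples} and~\ref{prop:pairs} and in Lemma~\ref{lem:isolated} their presence in $\Gh$ was already shown, for example as the terminal $\HH$, the final position before a guard, or a non-accepted $E$.
\end{itemize}
So I would just cite those presence statements.

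\emph{Pairwise disjointness.} I would classify each target vertex by its position type.

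First consider the central vertices.
\begin{itemize}
\item Triple centres are markers at $C$.
\item Pair centres are markers at $J$, which has height $k-1\ge2$. Since $\CC$ and $\JJ$ are disjoint for $k\ge3$, a $C$-centre is never a $J$-centre.
\item A $C$-marker in a triple is not the $u$ of any pair, because $u$ lies in $\{\E,\HH\}$.
\end{itemize}

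Next I would take the guard-adjacent segment vertices. In every family, the non-central targets lie in the segment immediately to the left of the centre, ending at that centre. Each non-central target is therefore determined by its offset from the centre: offsets $-2,-1$ for triples, and $-1$ for pairs.

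\begin{itemize}
\item \emph{A single vertex in two occurrences of the same family.} Knowing the offset pins down the centre, so the occurrences coincide.
\item \emph{Triple versus pair.} Here a shared vertex would have to be the offset $-1$ vertex of both, say $u$. Its right neighbour would then be both $C$ and $J$, which is impossible.
\item \emph{Triple versus triple, different centres.} The offset $-2$ vertex of one triple is at offset $-1$ from its own centre only if the centre is the adjacent tree. But that adjacent tree is a segment letter (the $\HH$ or $Y$), not $C$.
\item \emph{A vertex used with different offsets across families.} This forces a centre to lie inside a segment, contradicting the centre being an $\LL$ tree.
\end{itemize}

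Finally I would treat the separated triple exactly like family (i). Its pattern is (i) with $Q=\varepsilon$, but with right segment $\E\HH\E\HH$ instead of $\HH R$. Its right segment begins with $\E$, whereas the right segment in Proposition~\ref{prop:triples} begins with $\HH$, so the two families are distinct as occurrences. Any shared vertex again determines the centre, giving a contradiction.

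The main obstacle is the bookkeeping across families (i), (ii), the separated triple and the two pair families, where the same underlying forest could host several patterns. The rigorous reduction I would aim for is that every target vertex determines its centre uniquely: the centre is the first $\LL$ tree to the right of the vertex (or the vertex itself). Together with the offset and the shape of the right segment, this determines the family. I would check the ambiguous case of (i) versus (ii), which have different offsets for $\HH$, by noting that the test conditions $\state(Q)\notin\mathcal B$ and $\state(Q)\in\mathcal B$ are complementary. Once the prefix up to the centre is read, the family is therefore determined.
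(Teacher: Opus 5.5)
Your argument is essentially the paper's. Both work in a fixed forest and use the same facts:
\begin{itemize}
\item the centres are distinct trees $C$ and $J$ of category $\LL$;
\item every other target lies in $\{\E,\HH\}$ inside the segment ending at its centre;
\item hence a shared vertex forces a shared centre, after which one checks that a given centre hosts only one family.
\end{itemize}
Your phrase ``the centre is the first $\LL$ tree to the right'' is a neat restatement of the paper's ``distinct centres cannot be one or two positions apart''.

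One justification in the last step is wrong, though it is easily repaired. Cases (i) and (ii) cannot be separated by the complementarity of $\state(Q)\notin\mathcal B$ and $\state(Q)\in\mathcal B$. The two $Q$'s are different prefixes of the same left segment: (i) writes it as $Q\E\HH$ and (ii) as $Q'\E\HH Y$, so the two state conditions are not about the same word. The correct reason is letter-wise, as in the paper: the tree two positions left of $C$ is $\E$ in (i) and $\HH$ in (ii).

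Similarly, the two pair families at a common $J$ are not distinguished by the shape of the right segment, since test (b) forces $S=\HH R$ in both. They are distinguished by the letter immediately left of $J$, which is $\E$ for $(E,J)$ and $\HH$ for $(H,J)$, and this should be stated explicitly.

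With these two letter checks, together with your observation that the separated triple has $\E$ rather than $\HH$ immediately right of $C$, the proof is complete.
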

\begin{proof}
It suffices to check positions in a fixed underlying forest, since vertices with different underlying forests are different. Every triple is specified by its centre $C$ and occupies that position and the two positions immediately to its left. Those preceding trees both have categories in $\{\E,\HH\}$. Distinct eligible $C$ centres cannot be one or two positions apart, so their triples do not overlap. At a fixed centre, cases (i) and (ii) of Proposition~\ref{prop:triples} have different first boxed letters. The separated triple has an $E$ immediately to the right of $C$, whereas the other triple families have $\HH$ there.

Every pair is specified by its centre $J$ and occupies that position and the position immediately to its left, which has category $\E$ or $\HH$. Thus distinct eligible pair centres cannot be adjacent. At a fixed centre the two pair families have different preceding letters. A pair centre cannot be a triple position: its category is $\LL$, but it is not $C$ because $k\ge3$ and $J$ has height $k-1$. The left position of a pair cannot be one of the first two positions of a triple, because its right neighbour would then have category $\E$ or $\HH$, or be the tree $C$, whereas a pair requires that neighbour to be $J$. Nor can the left position of a pair be the triple centre, since its category is not $\LL$. This proves pair--triple disjointness.

The preceding propositions and lemma proved that every target vertex survives the first stage. This completes the proof.
\end{proof}

Disjoint vertex sets can still have edges between them. Accordingly, the individual incident-edge counts give an upper bound for the edge loss under simultaneous deletion, not an equality. Section~\ref{sec:counts} uses this inequality to obtain the density lower bound.

\section{Evaluation and passage to finite subgraphs}\label{sec:counts}
With $s=\sqrt3$ and $j=(s-\sqrt{2s-1})/2$, the quantities in \eqref{eq:Dmain} are
\begin{align}\label{eq:mainconstants}
\omega&=\frac{120197s-207644}{455008},&
\rho&=\frac{1410649978-795527183s}{6469758752},\notag\\
q_3&=\frac{24789s-42440}{1820032},&
q_2&=\frac{140462279622-81074695963s}{414064560128}\,j,\\
p_0&=\frac{518-299s}{131072}.&&\notag
\end{align}
The first triple and pair families, without the separated triple, give
\begin{equation}\label{eq:Dfour}
D_4=\frac72+\frac{\omega+q_3+2q_2}{2(1-\rho-3q_3-2q_2)},\qquad
3.5007448434200<D_4<3.5007448434201.
\end{equation}

Equations~\eqref{eq:Gclosed}--\eqref{eq:rhoformula}, \eqref{eq:q3closed}, \eqref{eq:Tclosed}--\eqref{eq:q2closed} and \eqref{eq:p0} evaluate every term in Theorem~\ref{thm:main} by rational operations on $e,h,\ell,j$. For reference, individual values are
\begin{align}\label{eq:individual}
p_{EHC}&=\frac{264737-152284\sqrt3}{3640064},\notag\\
p_{HHC}&=\frac{-154158+89005\sqrt3}{1820032},\qquad
p_{HEC}=\frac{-41301+23852\sqrt3}{3640064},\\
T&=\frac{314179000362-180976553799\sqrt3}{782840808992},\notag\\
T_c&=\frac{16841696349-9723292976\sqrt3}{24463775281}.\notag
\end{align}
These equalities follow by substitution into the formulas derived in Sections~\ref{sec:horizontal} and~\ref{sec:gadgets}.

\begin{proof}[Proof of Theorem~\ref{thm:main}]
For fixed $k\ge3$ use the parameters $e_k,h_k,\ell_k$ of \eqref{eq:finiteweights} and
$j_k=\Phi_{k-1}(\xi_k)-\Phi_{k-2}(\xi_k)$ in the same formulas. Denote the resulting quantities by a subscript $k$. The positive denominators and Lemma~\ref{lem:segments} give convergence of all these quantities to \eqref{eq:mainconstants} as $k\to\infty$.

Let $Y_{n,k}$ be the vertices remaining after the horizontal selection and all the local deletions above. By Lemma~\ref{lem:disjoint}, its limiting fraction among $\BB(n,k)$ is
\[
\sigma_k=1-\rho_k-3q_{3,k}-3p_{0,k}-2q_{2,k}.
\]
The horizontal stage loses $(7\rho_k-\omega_k)/2$ in doubled edge count per original vertex. Each triple reduces the doubled edge count by at most ten, and each pair by at most six. Edges between different deletion sets can be counted more than once by these incident-edge estimates; this only strengthens the lower bound. Therefore
\begin{equation}\label{eq:Dk}
\liminf_{n\to\infty}\dens(Y_{n,k})\ge
\frac72+\frac{1-4\xi_k+\omega_k+q_{3,k}+p_{0,k}+2q_{2,k}}{2\sigma_k}.
\end{equation}
Its limit as $k\to\infty$ is \eqref{eq:Dmain}. The rational comparison in Appendix~\ref{app:comparison} proves $\sigma=\lim_k\sigma_k>0.9941$ and the displayed enclosure for $D_+$. In particular the right side of \eqref{eq:Dk} is greater than $3.50074529$ for all sufficiently large $k$, and then $\dens(Y_{n,k})>3.50074529$ for all sufficiently large $n$. Equation~\eqref{eq:cheeger} gives the boundary bound.
\end{proof}

\section{Exact optimization on a prescribed window}\label{sec:windows}
The product law also permits optimization over every retention decision on a fixed window. We state this for the coarse alphabet $\mathcal A=\{\E,\LL,\HH\}$. The required distribution records the categories at both endpoints of a split edge, not only the category of a single tree.

Put
\[
u_1=\sqrt3/2,\qquad u_2=\sqrt{2\sqrt3-1}/2,
\quad p=(e,\ell,h),\quad v_1=(e,u_1-e,0),\quad v_2=(e,u_2-e,0).
\]
Let $\kappa_{tij}$ denote the mass of a tree in category $t$ whose left and right children have categories $i,j$. Then
\begin{equation}\label{eq:kappa}
\kappa_{\E ij}=0,\qquad
\kappa_{\LL ij}=(v_2)_i(v_2)_j,\qquad
\kappa_{\HH ij}=(v_1)_i(v_1)_j-(v_2)_i(v_2)_j.
\end{equation}
Indeed a root of height at most $k-1$ has children of height at most $k-2$, while a height-$k$ root is obtained by subtracting that case from roots with children of height at most $k-1$. Evaluating the child generating polynomials at $\xi_k$ and passing to the limit gives \eqref{eq:kappa}. These nonnegative masses sum to $\ell$ and $h$ in the two nonzero rows, and their total is $3/4$.

Fix $a,b\ge0$ and $w=a+b+1$. The type of a marker is the word of $w$ categories with $a$ positions to its left and $b$ to its right. A retention rule is a nonempty set $U\subseteq\mathcal A^w$; retain precisely its types. Forest-end conventions have zero limiting effect. In this section $\wt(c_1\cdots c_r)=\prod_i p_{c_i}$ for letters in $\mathcal A$. For a type $v$, put $\mu_v=\wt(v)$.

For a word $W$ of length at least $r$, let $\pref_r(W)$ denote its first $r$ letters, and let $\pref_0(W)$ be the empty word. Construct a weighted undirected graph of types. For each $z\in\mathcal A^{w+1}$ put weight $\wt(z)$ between its first and last $w$ letters; this counts a right marker move. For each $A\in\mathcal A^a$, $B\in\mathcal A^b$ and each $t,i,j$, put weight
\begin{equation}\label{eq:windowedge}
\wt(AB)\kappa_{tij}
\quad\text{between}\quad AtB\quad\text{and}\quad A\pref_{b+1}(ijB).
\end{equation}
This counts a split. Aggregate weights with the same unordered pair of endpoint types as $m_{uv}$, for a fixed ordering $u\le v$. A loop in the graph of types denotes equal \emph{types} at distinct Cayley vertices.

\begin{theorem}[Exact finite-window optimization]\label{thm:window}
The limiting density of the rule $U$ is
\begin{equation}\label{eq:windowdensity}
D(U)=\frac{2\sum_{u\le v,\ u,v\in U}m_{uv}}{\sum_{u\in U}\mu_u}.
\end{equation}
Its maximum over all nonempty $U$ is the optimum of
\begin{align}\label{eq:windowLP}
\text{maximize }&2\sum_{u<v}m_{uv}z_{uv}+2\sum_u m_{uu}x_u,\notag\\
\text{subject to }&0\le z_{uv}\le x_u,\quad z_{uv}\le x_v,\quad
x_u\ge0,\quad\sum_u\mu_u x_u=1.
\end{align}
All coefficients and the optimum belong to $K=\Q(\sqrt3,\sqrt{2\sqrt3-1})$.
An upper bound $d$ has a certificate consisting of allocations $A_{uv}$, $u<v$, satisfying
\begin{align}\label{eq:allocation}
0\le A_{uv}&\le2m_{uv},\notag\\
2m_{uu}+\sum_{v>u}A_{uv}+\sum_{v<u}(2m_{vu}-A_{vu})&\le d\mu_u
\quad\text{for every }u.
\end{align}
At the optimum, such an allocation exists in $K$.
\end{theorem}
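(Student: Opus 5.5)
The proof has three parts, taken in order: the density formula, the linear-programming reformulation, and the allocation certificate.

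\emph{Density formula.} We count vertices and edges of the retained set through Lemma~\ref{lem:ratio}. Fix $k$ and $n$. By Lemma~\ref{lem:ratio}, the fraction of markers of type $u$ tends to $\wt(u)=\mu_u$, first as $n\to\infty$ and then as $k\to\infty$ (the window $[-a,b]$ is fixed).

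Each undirected edge of $\BB(n,k)$ is either a horizontal edge or a split/merge edge. We orient horizontal edges by right moves and vertical edges by splits, so each edge is counted exactly once.

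For a right move from a marker of type $u$, the joint type pair is determined by the window of $w+1$ trees. By Lemma~\ref{lem:ratio} its limiting mass is $\wt(z)$, where $z$ has first $w$ letters $u$ and last $w$ letters $v$.

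For a split at a marker of type $AtB$, the new window is $A\,\pref_{b+1}(ijB)$. Its limiting mass is $\wt(AB)\kappa_{tij}$: apply Lemma~\ref{lem:ratio} with $R$ the product of the window polynomials, where the centre factor is refined by the child generating polynomials. Then evaluate at $\xi_k$ and let $k\to\infty$, using the same computation as \eqref{eq:kappa}. Two points need care. The left context $A$ is unchanged. Only $b+1$ of the $b+2$ letters $ijB$ enter the new window, and the last letter of $B$ is summed out, which is consistent because the letter weights sum to one.

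With this, the number of edges of the induced subgraph on the retained markers, divided by $|\BB(n,k)|$, tends to $\sum_{u\le v\in U}m_{uv}$. Forest ends contribute only simple poles. Dividing twice this edge mass by $\sum_{U}\mu_u$ gives \eqref{eq:windowdensity}.

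\emph{LP reformulation.} This is the standard Charikar argument for weighted densest subgraph. For the first inequality, given $U$, set $x_u=\ind_U(u)/\mu(U)$ and $z_{uv}=\min(x_u,x_v)$; this is feasible and has value $D(U)$. For the converse, take an optimal LP solution. We may assume $z_{uv}=\min(x_u,x_v)$. Then level sets give
\[
\sum_{u<v}m_{uv}\min(x_u,x_v)+\sum_u m_{uu}x_u=\int_0^\infty E(U_\tau)\,d\tau,
\qquad \int_0^\infty\mu(U_\tau)\,d\tau=1,
\]
where $U_\tau=\{x>\tau\}$ and $E(U)$ denotes the edge mass $\sum_{u\le v\in U}m_{uv}$. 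So some level set satisfies $D(U_\tau)\ge$ the LP value. Because the LP has finitely many variables and all coefficients lie in $K$, the optimum is attained at a vertex, and that vertex has coordinates in $K$.

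\emph{Certificate.} We take the LP dual. It has a variable $d$ for the normalisation and multipliers $\alpha_{uv},\beta_{uv}\ge0$ for $z_{uv}\le x_u$ and $z_{uv}\le x_v$. Stationarity in $z_{uv}$ gives $\alpha_{uv}+\beta_{uv}\ge2m_{uv}$, and we can take equality. Set $A_{uv}=\alpha_{uv}$, so $\beta_{uv}=2m_{uv}-A_{uv}$. Stationarity in $x_u$ then becomes exactly \eqref{eq:allocation}.

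Weak duality shows that any such allocation bounds $D(U)$ by $d$. Directly: summing \eqref{eq:allocation} over $u\in U$ dominates $2E(U)$, since each edge inside $U$ has both shares counted and all other terms are nonnegative. Strong duality, again over the ordered field $K$, gives an optimal allocation in $K$.

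The main obstacle is the split-edge mass. We must justify the child-category masses $\kappa$ and the truncation $\pref_{b+1}$ rigorously through Lemma~\ref{lem:ratio}. In particular, we must show that the finite-$k$ centre factor
\[
(\Phi_{k-1}-\Phi_{k-2})\text{-type differences of products of child polynomials}
\]
converges to \eqref{eq:kappa}, and that no edge is double counted when types coincide (the loops).
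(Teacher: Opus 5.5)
Your proposal is correct and follows essentially the same route as the paper: vertex and edge masses are counted via Lemma~\ref{lem:ratio} and \eqref{eq:kappa}, with each undirected edge counted once in the rightward or split direction. The LP is shown to have no gap by level-set (threshold) rounding, and its dual, with $A_{uv}$ the multiplier on $z_{uv}\le x_u$, yields exactly \eqref{eq:allocation}, whose sufficiency you check directly by summing over $U$. The convergence of the split masses that you flag as the main obstacle is settled by the derivation of \eqref{eq:kappa} just before the theorem, so it leaves no gap here.
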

\begin{proof}
Lemma~\ref{lem:ratio} counts vertex and horizontal endpoint masses. Equation~\eqref{eq:kappa} counts the root and both children jointly, and the marker stays at the left child, giving \eqref{eq:windowedge}. Each undirected Cayley edge is counted once, in the rightward or split direction. Requiring both types to survive gives \eqref{eq:windowdensity}. As a check, $\sum_u\mu_u=1$ and $\sum_{u\le v}m_{uv}=1+3/4=7/4$.

For a set $U$ of mass $M$, take $x_u=\ind_{u\in U}/M$ and $z_{uv}=\min(x_u,x_v)$ in \eqref{eq:windowLP}. Conversely, for any feasible solution put $U_t=\{u:x_u>t\}$. The identities
\[
x_u=\int_0^\infty\ind_{u\in U_t}\,dt,\qquad
\min(x_u,x_v)=\int_0^\infty\ind_{u,v\in U_t}\,dt
\]
show that its objective is at most a weighted average of the densities $D(U_t)$, because the integrated vertex mass equals one. Some threshold set is at least as good. Thus there is no gap between the linear program and the set optimum. This is the weighted form of the classical densest-subgraph method \cite{Goldberg}.

To prove the upper certificate directly, sum \eqref{eq:allocation} over $u\in U$. Each retained internal edge contributes its full doubled weight, and edges with only one retained endpoint contribute a nonnegative amount. This proves $D(U)\le d$. The dual of \eqref{eq:windowLP} assigns the doubled weight of each off-diagonal edge between its endpoints and gives exactly these constraints; excess allocation can be reduced. Finite-dimensional linear-programming duality gives equality at the optimum. Since the coefficients are in $K$, an optimal basic solution of the primal and dual is obtained by linear algebra over $K$.
\end{proof}

\subsection{A short certificate for arbitrary left context}
For windows with one position to the right of the marker, we can give an explicit certificate valid for every fixed number of positions to the left.
\begin{theorem}[One right neighbour]\label{thm:one-right}
For every fixed $a\ge0$, among all coarse-category retention rules on the window $[-a,1]$, the maximum limiting density is $7/2$. The limit is taken first as $n\to\infty$ and then as $k\to\infty$, as above.
\end{theorem}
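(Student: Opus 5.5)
The lower bound is immediate. Take $U=\mathcal A^w$, retaining every type. The check in the proof of Theorem~\ref{thm:window} gives $\sum_u\mu_u=1$ and $\sum_{u\le v}m_{uv}=7/4$, so \eqref{eq:windowdensity} yields $D(U)=7/2$; equivalently, this is the Belk--Brown limit \eqref{eq:base}. For the upper bound I will apply the certificate criterion \eqref{eq:allocation} of Theorem~\ref{thm:window} with $d=7/2$ and $w=a+2$. That is, I will exhibit allocations $A_{uv}$ such that every type $u=A\,t\,x$ (with $A\in\mathcal A^a$ and $t,x\in\mathcal A$) receives total doubled edge weight at most $\tfrac72\mu_u$.

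\emph{Horizontal edges.} I split the doubled weight $2\wt(z)$ of each rightward edge equally between its two endpoint types. A type $u$ is the left endpoint of the edges $z=uc$ with $c\in\mathcal A$, whose weights sum to $\mu_u$. It is the right endpoint of the edges $z=cu$, whose weights also sum to $\mu_u$. So every type receives exactly $2\mu_u$ from horizontal edges. It remains to allocate the split edges so that every type receives at most $\tfrac32\mu_u$ from them.

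\emph{Split edges, and removing the left context.} With $b=1$, \eqref{eq:windowedge} joins the root type $A\,t\,B$ to the child type $A\,i\,j$ with weight $\wt(A)p_B\kappa_{tij}$, where $B\in\mathcal A$. Both endpoints carry the same left word $A$. I will choose the fraction of the doubled weight sent to the root side as a number $\lambda_{tB;ij}\in[0,1]$ that does not depend on $A$. Every constraint for a type $A\,s\,x$ is then $\wt(A)$ times the corresponding constraint for the $a=0$ type $s\,x$, so \eqref{eq:allocation} reduces to a single finite problem on the nine types $sx\in\mathcal A^2$, independent of $a$. Concretely, with $\mu_{sx}=p_sp_x$, I need for each $(s,x)$:
\[
\sum_{i,j}2\lambda_{sx;ij}\,p_x\kappa_{sij}
+\sum_{t,B}2(1-\lambda_{tB;sx})\,p_B\kappa_{tsx}
\le\tfrac32\,p_sp_x .
\]

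\emph{The transportation problem.} The total split weight is $\sum\kappa=3/4$, so the doubled weight is $3/2=\tfrac32\sum\mu$. Hence all nine inequalities must hold with equality. Dividing by $p_x$ where appropriate, this becomes a bipartite transportation problem. On one side are the root slots $(t,B)$, which supply the edges; on the other are the child slots $(i,j)$. Each edge's doubled weight must be divided between its two endpoint classes so as to fill every type's quota $\tfrac32p_sp_x$ exactly. The main obstacle is to show that this is feasible with every $\lambda\in[0,1]$, and to write the solution in $K$. The difficulty comes from the uneven raw degrees:
\begin{itemize}
\item an $\E$ marker followed by $\HH$ has degree $2$;
\item an $\E$ marker followed by $\E$ or $\LL$, and any $\HH$ marker, have degree $3$;
\item an $\LL$ marker followed by $\E$ or $\LL$ has degree $4$.
\end{itemize}
Since $\kappa_{\E ij}=0$, a type $\E x$ is never a root, and the types $\E\HH$ can receive only from split edges whose left child is $E$.

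My first attempt will be to guess $\lambda$ from these degrees, sending the surplus of the degree-$4$ types $\LL x$ to the deficient $\E$-child types. I will then verify the nine linear equalities symbolically using \eqref{eq:kappa}, $e=1/4$ and $u_1,u_2$. If a direct guess fails, I will instead solve the equalities as a small network flow. In that case the Gale--Hall cut conditions reduce to finitely many inequalities among $e,\ell,h,u_1,u_2$, which can be checked numerically with rational enclosures as in Appendix~\ref{app:comparison}. Once feasibility is established, Theorem~\ref{thm:window} gives $D(U)\le7/2$ for every $U$ and every $a$, which completes the proof.
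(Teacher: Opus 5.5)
\textbf{Gap.} Your framework matches the paper's: the lower bound from $U=\mathcal A^w$, the certificate \eqref{eq:allocation} with $d=7/2$, and cancellation of the left word $A$. But the specific choice of splitting every horizontal edge equally makes the certificate infeasible. You also never exhibit a split allocation.

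Once each type receives exactly $2\mu_u$ from horizontal edges, you correctly deduce that each type must receive exactly $\tfrac32\mu_u$ from split edges. The type $\E\HH$ (marker at $E$, right neighbour of height $k$) is incident to \emph{no} split edge at all. It is never a root, since $\kappa_{\E ij}=0$. As a child type $A\,i\,j$ it would need $j=\HH$, but $\kappa_{\LL\E\HH}=\kappa_{\HH\E\HH}=0$, because a root $(E,H)$ would have height $k+1$. So your remark that $\E\HH$ receives from splits whose left child is $E$ is false; those edges land on $\E\E$ and $\E\LL$.

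Under your scheme, therefore, $\E\HH$ receives exactly $2\mu_{\E\HH}<\tfrac72\mu_{\E\HH}$. Since the total allocated mass is $\tfrac72\sum_u\mu_u$, some other type must exceed its bound. In flow terms, the Hall condition already fails at the singleton $\{\E\HH\}$, which has demand $\tfrac32eh$ and no supply. No choice of $\lambda$ and no network-flow computation can repair this.

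\textbf{What is needed.} The horizontal edges must carry the redistribution. The paper biases each rightward edge by the categories at its target. If the target marker is in category $c$ with right neighbour $d$, the source receives $(1+f_{cd})$ and the target $(1-f_{cd})$ times the edge mass, where
\[
f_{bc}=2\eta_b+2\eta_c+2t_c(t_b+z)-3\in[-1,1],
\]
with $z=\sqrt\ell$, $t=(1,\theta,0)$, $\theta=(z-e)/\ell$ and $\eta_b=\ind_{b=\HH}$. Split mass goes entirely to the parent when the parent is in $\HH$, and entirely to the left child when the parent is in $\LL$. This gives split contribution $2\eta_b+2t_bt_c$ per unit mass at a type ending in $bc$.

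With $g_c=\sum_dp_df_{cd}=2\eta_c+2zt_c-\tfrac32$ (using $h+z^2=\tfrac34$), each type receives $2+g_c-f_{bc}+2\eta_b+2t_bt_c=\tfrac72$ per unit mass. This is independent of $a$, because the bias depends only on the last two categories. For example, $f_{\E\HH}=-1$ hands the full doubled mass of every incoming rightward edge to $\E\HH$, which is exactly what compensates its missing split degree.
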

\begin{proof}
Set
\[
z=u_2=\sqrt\ell,\qquad \theta=\frac{z-e}{\ell},\qquad
(t_\E,t_\LL,t_\HH)=(1,\theta,0),\qquad \eta_b=\ind_{b=\HH}.
\]
Thus $\sum_b p_bt_b=z$, $\sum_b p_b\eta_b=h$, and $h+z^2=3/4$.
Define a horizontal bias, indexed by two categories, by
\begin{equation}\label{eq:onebias}
f_{bc}=2\eta_b+2\eta_c+2t_c(t_b+z)-3.
\end{equation}
In the order $\E,\LL,\HH$ its nine entries are
\begin{equation}\label{eq:biasmatrix}
(f_{bc})=
\begin{pmatrix}
2z-1&2\theta(1+z)-3&-1\\
2(\theta+z)-3&2\theta(\theta+z)-3&-1\\
2z-1&2\theta z-1&1
\end{pmatrix}.
\end{equation}
All lie in $[-1,1]$: the elementary bounds $3/4<z<4/5$ and $4/5<\theta<9/10$ suffice for each displayed entry.

We allocate the doubled mass of every elementary edge before aggregating equal endpoint types. On a rightward marker edge, let $c,d$ be the marked category and its right neighbour \emph{at the target}. Assign $1+f_{cd}$ times the edge mass to its source, and $1-f_{cd}$ times the mass to its target. For a split with parent in $\HH$, assign all doubled mass to the parent endpoint. For a split with parent in $\LL$, assign it all to the left-child endpoint. Every allocation is nonnegative and uses exactly the doubled edge mass.

Consider a type ending in $b,c$, of mass $\mu$. The total allocated split mass at this type, divided by $\mu$, is
\[
2\eta_b+2t_bt_c.
\]
The first term is the contribution of its own height-$k$ root; the second is the joint mass $\kappa_{\LL bc}=(v_2)_b(v_2)_c$, divided by $p_bp_c$. The $a$ preceding categories have the same product weight on both split endpoints and cancel from this quotient.

Put $g_c=\sum_d p_df_{cd}$. Equation~\eqref{eq:onebias} and $h+z^2=3/4$ give
\[
g_c=2\eta_c+2zt_c-\frac32,\qquad
f_{bc}=2\eta_b+2t_bt_c-\frac32+g_c.
\]
The rightward edge allocation contributes $1+g_c$ per unit vertex mass; incoming rightward edges contribute $1-f_{bc}$. Including the split allocation, the total is consequently
\[
2+g_c-f_{bc}+2\eta_b+2t_bt_c=\frac72.
\]
This calculation is independent of $a$. Summing the nonnegative allocations over any retained collection of types proves the upper bound as in \eqref{eq:allocation}. Retaining all types attains $7/2$, proving equality.
\end{proof}

For example, $a=1$ gives the exact optimum for every rule on the three positions $[-1,1]$; the same nine entries prove the result for every larger fixed left context. The statement does not enlarge the right window, refine the category alphabet, or allow root inspections. The constructions of Sections~\ref{sec:gadgets}--\ref{sec:counts} distinguish the ordered children of a root and use whole segments rather than a fixed category window. Their densities can therefore exceed this restricted optimum.

\section*{Acknowledgements}
The author thanks Matthew Brin for comments that improved the contextual discussion in the introduction. Generative-AI tools, principally OpenAI ChatGPT and Anthropic Claude, were used as research aids for exploratory calculations, development and testing of candidate deletion rules, code generation and review, symbolic and exact-arithmetic checks, and drafting and revision of the manuscript. The author independently checked the mathematical claims and takes full responsibility for the contents.

\appendix
\section{An elementary rational comparison}\label{app:comparison}
Set $s=\sqrt3$ and $t=\sqrt{2s-1}$. The following rational intervals suffice; every terminating decimal in this appendix denotes an exact rational number:
\begin{align*}
1.732050807568877293527446341505&<s<1.732050807568877293527446341506,\\
1.569745716712663811642153567306&<t<1.569745716712663811642153567307.
\end{align*}
The first pair is checked by squaring. For the second, square the lower endpoint and compare it with $2s_--1$, then square the upper endpoint and compare it with $2s_+-1$, where $s_-,s_+$ are the first pair of rational endpoints. Propagate these intervals through $j=(s-t)/2$ and \eqref{eq:mainconstants}. Ordinary rational interval arithmetic, taking account of coefficient signs, gives
\[
0.9941<1-\rho-3q_3-3p_0-2q_2<0.9942,
\]
and
\[
3.5007448434200<D_4<3.5007448434201,\qquad
3.5007452936557<D_+<3.5007452936559.
\]
This is a finite rational calculation using only the five constants in \eqref{eq:mainconstants}; it uses no enumerated state list or floating-point assertion.

\end{document}